\def\?[#1]{\textbf{[#1]}\marginpar{\Large{\textbf{??}}}}
\newtheorem{prop}{Proposition}[section]
\newtheorem{defi}[prop]{Definition}
\newtheorem*{theorem*}{Theorem}
\newtheorem{theorem}[prop]{Theorem}
\newtheorem{lemma}[prop]{Lemma}
\newtheorem{corr}[prop]{Corollary}
\newtheorem{rem}[prop]{Remark}
\numberwithin{equation}{section}
\newtheorem{question}{Question}
\newtheorem*{main-question}{Main Question}
\newcommand{\N}{\mathbb{N}}
\newcommand{\R}{\mathbb{R}}
\newcommand{\C}{\mathbb{C}}
\renewcommand{\phi}{\varphi}
\newcommand{\uno}{\mathds{1}}
\newcommand{\eps}{\varepsilon}
\DeclareMathOperator{\spn}{span}
\begin{document}

\title{Isometric embeddings into $C(K)$-spaces doing stable phase retrieval}

\author[Garc\'ia-S\'anchez]{Enrique Garc\'ia-S\'anchez}
\address{Instituto de Ciencias Matem\'aticas (CSIC-UAM-UC3M-UCM)\\
Consejo Superior de Investigaciones Cient\'ificas\\
C/ Nicol\'as Cabrera, 13--15, Campus de Cantoblanco UAM\\
28049 Madrid, Spain.
\newline
	\href{https://orcid.org/0009-0000-0701-3363}{ORCID: \texttt{0009-0000-0701-3363} } }
\email{enrique.garcia@icmat.es}

\author[de Hevia]{David de Hevia}
\address{Instituto de Ciencias Matem\'aticas (CSIC-UAM-UC3M-UCM)\\
Consejo Superior de Investigaciones Cient\'ificas\\
C/ Nicol\'as Cabrera, 13--15, Campus de Cantoblanco UAM\\
28049 Madrid, Spain.
\newline
\href{https://orcid.org/0009-0003-5545-0789}{ORCID: \texttt{0009-0003-5545-0789}}}
\email{david.dehevia@icmat.es // davhevia@ucm.es}

\keywords{Banach lattice; phase retrieval; stable phase retrieval; $C(K)$-space.}

\subjclass[2020]{46B20, 46B42, 46E15}

\begin{abstract}
Motivated by a question posed by Freeman, Oikhberg, Pineau and Taylor, we prove that if $K$ is a compact Hausdorff space with $K^{(\alpha)}\neq\varnothing$, where $2<\alpha<\omega$, then $C[1,\omega^\alpha]$ isometrically embeds into $C(K)$ doing stable phase retrieval (SPR). We also show that the latter cannot be extended to the case $\alpha=2$.
\end{abstract}

\maketitle

\section{Introduction}

In many areas of physics and engineering, such as X-ray crystallography, astronomy, quantum mechanics, or speech recognition, it is often necessary to \textit{recover} a signal from the measurement of a \textit{phaseless quantity} associated with it, such as the intensity of an electromagnetic or acoustic wave, or the probability density of the wave function of a quantum system. In these situations, the sign or phase of the signal (depending on whether it has a real or complex nature) can contain relevant information impossible to obtain directly from measurement devices, which can only record the absolute value. Therefore, a mathematical approach is needed for its recovery. These kinds of problems are known as \textit{phase retrieval problems} (see the surveys \cite{surveyGKR,surveyJEH} for an overview of the topic). It should be noted that, given a fixed function, any scalar of modulus one multiplied by the function will always have the same absolute value, so there will always be an unavoidable ambiguity in the process of recovering a function from its absolute value. Moreover, many of the examples originate from experimental disciplines, where empirical errors can perturb the original signal. Therefore, a study of the stability of the recovery process is also necessary (\textit{stable phase retrieval problems}).\\

For modeling phase retrieval problems, \emph{function spaces} offer a natural framework. These include, for instance, \emph{$L_p$-spaces} or \emph{spaces of continuous functions on compact Hausdorff spaces} (for short, \emph{$C(K)$-spaces}). However, it should be noted that the formulation of (stable) phase retrieval problems only requires a linear and (potentially) a normed structure, and an operation of absolute value or modulus. This is the reason why the theory of Banach lattices, which are an abstract generalization of function spaces whose properties are defined precisely by these three factors (see \cite{AliprantisBurkinshaw, LindenstraussTzafririVol2, MeyerNieberg} for standard notation and basic definitions), has provided very useful tools to tackle these questions \cite{CGH,FOPT}. This approach has been successfully applied in $L_p$-spaces to construct infinite-dimensional subspaces doing stable phase retrieval \cite{CDFF,CPT,FOPT}. In contrast, and despite of being an extensively studied family of Banach lattices, $C(K)$-spaces have received \emph{little attention} in the context of stable phase retrieval, aside from what is found in \cite{FOPT}. Thus, the objective of this work is to continue the study of stable phase retrieval in $C(K)$-spaces initiated in \cite{FOPT}.\\

\subsection{Motivation and preliminary results} As explained in \cite{FOPT}, Banach lattices provide a suitable general setting for many phase retrieval problems, such as Fourier phase retrieval, Gabor phase retrieval or wavelet phase retrieval, and even other inverse problems such as audio declipping \cite{AFRT}.
In these examples, the problem consists in uniquely recovering an element $f$ of a Banach space $F$ from $|Tf|$, which is the modulus of the image of $f$ by an invertible operator $T$ from $F$ into a suitable target Banach lattice $X$. In the case of Fourier phase retrieval, $F=X=L_2$ and $T$ is the Fourier transform, whereas for the Gabor and wavelet phase retrieval, $T$ corresponds to the analysis operator of a certain frame on the Banach space $F$. These problems can be split into two parts: the first one is to retrieve $Tf$ (up to a global phase) from $|Tf|$ within the subspace $E=T(F)\subseteq X$, and the second one is to invert $T$, a problem which is much better understood. This motivates the following definition:

\begin{defi}[{\cite[Definition 3.1]{FOPT}}]\label{defi: PR in BL}
    A subspace $E$ of a Banach lattice $X$ does \emph{phase retrieval (PR)} if whenever $x,y\in E$ satisfy $|x|=|y|$, it follows that $x=\lambda y$ for some unimodular scalar $\lambda$.
\end{defi}

Consider the equivalence relation $\sim$ given by $x\sim y$ if and only if $x=\lambda y$ for some unimodular scalar $\lambda$. The fact that a subspace $E\subseteq X$ does PR is equivalent to the injectivity, or equivalently, the invertibility, of the map that sends any equivalence class of $E/\sim$ to the absolute value in $X$ of a representative of the class. However, in applications it is also necessary to understand \emph{how stable} is the inverse map under small perturbations, such as experimental errors. That is, studying if the inverse map has some kind of continuity, and more specifically, Lipschitz continuity:

\begin{defi}[{\cite[Definition 3.2]{FOPT}}]\label{defi: SPR in BL}
    A subspace $E$ of a  Banach lattice $X$ does \emph{$C$-stable phase retrieval} (\emph{$C$-SPR}, for short) if
    \begin{equation}\label{eq: SPR}
    \min_{|\lambda|=1}\|x-\lambda y\|\leq C\, \bigl\||x|-|y|\bigr\|
\end{equation}
    for every $x,y\in E$. We will say that $E$ does \emph{stable phase retrieval (SPR)} if it does $C$-SPR for some constant $C\geq 1$.
\end{defi}

The last two definitions can be considered for both real and complex scalars; however, there are some noteworthy differences between the two cases. Specifically, recall that in the real setting \emph{the main and only obstruction} that can cause a subspace $E$ of a Banach lattice $X$ to fail PR is the existence of pairs of disjoint vectors in $E$, that is, non-zero functions $f,g\in E$ such that $|f|\land |g|=0$ (or, equivalently, $fg=0$ if $X$ is a function space). This fact appears implicitly in \cite{FOPT} and its proof can be found in \cite[Proposition 2.1]{CGH}. Motivated by the aforementioned characterization of phase retrieval, the authors in \cite{FOPT} consider the notion of \emph{almost disjoint pairs} with the aim of studying SPR:

\begin{defi}\label{defi: almost disjoint pairs}
Let $E$ be a subspace of a Banach lattice $X$. Given $\eps>0$, we say that $f,g\in S_{E}$ is a \emph{(normalized) $\eps$-almost disjoint pair} if $\||f|\wedge|g|\|\leq \eps$. If $E$ contains $\varepsilon$-almost disjoint pairs for every $\varepsilon>0$, we say that $E$ contains \emph{almost disjoint pairs}.
\end{defi}

In fact, almost disjoint pairs were used in \cite[Theorem 3.4]{FOPT} to provide a powerful characterization of SPR for real scalars, which reads as follows: the subspaces of a Banach lattice which do stable phase retrieval are precisely those that do not contain almost disjoint pairs. We present below a refinement of this result due to E. Bilokopytov \cite[Proposition 3.4]{Eugene}. 

\begin{theorem}\label{thm: real stable phase retrieval}
    Let $E$ be a subspace of a real Banach lattice $X$, and let $C>0$ be a constant. $E$ does $C$-SPR if and only if $E$ does not contain any $\frac{1}{C}$-almost disjoint pair. In particular, $E$ does SPR if and only if it does not contain almost disjoint pairs.
\end{theorem}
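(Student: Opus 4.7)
The plan is to prove both implications by contrapositive, using two elementary real-lattice identities
\begin{equation*}
\bigl||u+v|-|u-v|\bigr| \;=\; 2(|u|\wedge|v|), \qquad |u+v|\wedge|u-v| \;=\; \bigl||u|-|v|\bigr|,
\end{equation*}
to set up an explicit correspondence between normalized $\eps$-almost disjoint pairs in $E$ and pairs witnessing the failure of $C$-SPR.

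For the direction ``$C$-SPR $\Rightarrow$ no $\tfrac{1}{C}$-almost disjoint pair'' I would argue contrapositively: suppose $f,g\in S_E$ satisfy $\||f|\wedge|g|\|\leq 1/C$, and substitute $x:=f+g$, $y:=f-g$. Taking lattice-norms in the first identity above gives $\||x|-|y|\|=2\||f|\wedge|g|\|\leq 2/C$, while a direct calculation yields $\min_{|\lambda|=1}\|x-\lambda y\| = \min(\|2f\|,\|2g\|)=2$. Hence $\min_{|\lambda|=1}\|x-\lambda y\|\geq C\||x|-|y|\|$, with equality exactly when $\||f|\wedge|g|\|=1/C$; this violates the $C$-SPR inequality (the borderline case is absorbed by passing to any $f,g$ with $\||f|\wedge|g|\|$ strictly below $1/C$).

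For the reverse direction, assume $E$ has no $1/C$-almost disjoint pair and fix arbitrary $x,y\in E$; we need to verify the $C$-SPR inequality. If $|x|=|y|$, then the second identity yields $|x+y|\wedge|x-y|=0$, so if both $x+y$ and $x-y$ were nonzero their normalizations would furnish a (perfectly) disjoint pair in $S_E$, contradicting the hypothesis; hence $x=\pm y$ and the inequality is trivial. Otherwise set $u:=x+y$ and $v:=x-y$, normalize $f:=u/\|u\|$ and $g:=v/\|v\|$ in $S_E$, and (without loss of generality) assume $\|u\|\leq\|v\|$. Then
\begin{equation*}
|f|\wedge|g| \;\leq\; \frac{|u|\wedge|v|}{\|u\|} \;=\; \frac{\bigl||x|-|y|\bigr|}{\|u\|}.
\end{equation*}
Taking norms and invoking the hypothesis $\||f|\wedge|g|\|>1/C$ yields $\|u\|<C\||x|-|y|\|$, and since $\min_{|\lambda|=1}\|x-\lambda y\|=\min(\|u\|,\|v\|)=\|u\|$, the required estimate follows.

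The main conceptual step is recognizing that the substitutions $(x,y)\leftrightarrow(f+g,f-g)$ and $(f,g)\leftrightarrow(u/\|u\|,v/\|v\|)$ are dual under the two lattice identities, reducing the statement to a direct computation. The only mild technical point is the borderline equality $\||f|\wedge|g|\|=1/C$, handled by strict-versus-non-strict bookkeeping on the constant; no structural results on Banach lattices beyond monotonicity of the norm and the two identities above are needed.
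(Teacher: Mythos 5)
First, a point of reference: the paper does not actually prove this theorem --- it is quoted from Bilokopytov's preprint (as a refinement of [FOPT, Theorem~3.4]) --- so there is no in-paper proof to compare against. That said, your two lattice identities and the substitution $(x,y)=(f+g,f-g)$ are precisely the standard mechanism behind the proofs in those sources, and your second direction (no $\tfrac1C$-almost disjoint pair $\Rightarrow$ $C$-SPR) is complete and correct: when $|x|\neq|y|$ both $u=x+y$ and $v=x-y$ are nonzero, so the normalization is legitimate, and the chain $\||f|\wedge|g|\|\le\||x|-|y|\|/\|u\|$, $\|u\|<C\||x|-|y|\|$, $\min_{|\lambda|=1}\|x-\lambda y\|=\min(\|u\|,\|v\|)=\|u\|$ closes the argument.

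The genuine gap is the borderline case of the forward direction, and your parenthetical fix does not work. If the only witnesses satisfy $\||f|\wedge|g|\|=\tfrac1C$ exactly, there need not exist any pair with value strictly below $\tfrac1C$ to ``pass to,'' and your construction then only yields $\min_{|\lambda|=1}\|x-\lambda y\|=C\||x|-|y|\|$, which is consistent with $C$-SPR as defined by a non-strict inequality. In fact, with the paper's conventions ($\eps$-almost disjoint means $\||f|\wedge|g|\|\le\eps$, and $C$-SPR is the non-strict bound $\min_{|\lambda|=1}\|x-\lambda y\|\le C\||x|-|y|\|$) the forward implication is genuinely false at the boundary: take $E=\mathrm{span}\{e\}$ for a positive norm-one vector $e$; every normalized pair in $E$ satisfies $\||f|\wedge|g|\|=\|e\|=1$, so $E$ contains a $1$-almost disjoint pair, and yet $E$ does $1$-SPR because $\min(\|ae-be\|,\|ae+be\|)=\bigl||a|-|b|\bigr|=\bigl\||ae|-|be|\bigr\|$. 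What your argument honestly proves is that $C$-SPR implies $\||f|\wedge|g|\|\ge\tfrac1C$ for all $f,g\in S_E$, i.e.\ that $E$ contains no $\eps$-almost disjoint pair for any $\eps<\tfrac1C$; combined with your converse this identifies the optimal SPR constant as $\bigl(\inf_{f,g\in S_E}\||f|\wedge|g|\|\bigr)^{-1}$, which is the content of the theorem up to this boundary convention. You should state it in that form rather than claim the borderline case is ``absorbed.'' The ``in particular'' clause (SPR iff no almost disjoint pairs) is unaffected, since it quantifies over all $\eps>0$ and never touches the boundary.
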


It is also worth mentioning that a complex version of the above statement was established in \cite[Theorem 2.7]{CGH}. In the complex case, we can only guarantee one of the implications of the previous theorem: if a subspace of a complex Banach lattice does SPR, then it does not contain almost disjoint pairs. More precisely, one can show the following \cite[Proposition 2.5]{CGH}:

\begin{prop}\label{prop: failing complex SPR}
Let $E$ be a subspace of a complex Banach lattice $X$ and $\varepsilon>0$. If there exist $u,v\in S_E$ such that $\||u|\land |v|\|<\varepsilon$, then $E$ fails $\frac{1}{\sqrt{2}\varepsilon}$-SPR.    
\end{prop}

This characterization of stable phase retrieval in terms of the non-existence of almost disjoint pairs is one of the key ingredients used by Freeman, Oikhberg, Pineau and Taylor to show that a $C(K)$-space contains an infinite-dimensional subspace doing SPR if and only if $K'$ is infinite \cite[Theorem 6.1]{FOPT}, where $K'$ stands for the set of non-isolated (or accumulation) points of $K$. More specifically, these authors proved the following (see also \cite[Theorem 5.3]{Eugene}):

\begin{theorem}\label{thm: SPR subspace iif K' infinite}
    Suppose that $K$ is a compact Hausdorff space. Then, the following are equivalent:
    \begin{enumerate}[(i)]
        \item $K'$ is infinite.
        \item $c_0$ embeds isometrically into $C(K)$ as an SPR subspace.
        \item $C(K)$ contains an (infinite-dimensional) SPR subspace.
    \end{enumerate}
\end{theorem}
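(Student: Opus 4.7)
The implication $(ii)\Rightarrow(iii)$ is immediate, and for $(iii)\Rightarrow(i)$ I would argue by contrapositive: assume $K'$ is finite and let $E \subseteq C(K)$ be any infinite-dimensional subspace. Since $C(K')$ is finite-dimensional, the kernel $E_0$ of the restriction map $E \to C(K')$ is still infinite-dimensional; and since every $f \in E_0$ vanishes on $K'$, the restriction of $E_0$ to the discrete open set $K\setminus K'$ gives an isometric embedding $E_0 \hookrightarrow c_0(K\setminus K')$. The argument then reduces to showing that every infinite-dimensional subspace of $c_0(\Gamma)$ contains almost disjoint pairs: fix $f \in S_{E_0}$ and $\varepsilon > 0$, observe that $F_\varepsilon := \{|f|>\varepsilon\}$ is finite by the $c_0$ condition, and then the kernel of the restriction $E_0 \to C(F_\varepsilon)$ still has infinite dimension and contains a unit vector $g$ vanishing on $F_\varepsilon$, yielding $\||f|\wedge|g|\|_\infty \leq \varepsilon$. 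Theorem~\ref{thm: real stable phase retrieval} then shows that $E$ fails SPR.

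For $(i)\Rightarrow(ii)$ I would build an isometric embedding $T : c_0 \hookrightarrow C(K)$ whose image does SPR. The starting point is that $K'$ infinite and closed in compact $K$ forces $K'' \neq \varnothing$. Pick $p \in K''$ and (possibly after passing to a suitable metrizable closed subset to circumvent the fact that compact Hausdorff spaces need not be first countable) extract a sequence of distinct points $(p_n) \subseteq K'\setminus\{p\}$ with $p_n \to p$. Each $p_n \in K'$ yields a further sequence $(q_{n,k})_{k\geq 0} \subseteq K\setminus\{p_n\}$ with $q_{n,k}\to p_n$, all chosen pairwise distinct. Designate $x_n := q_{n,0}$ as the peak of $f_n$ and, for $i<j$, $y_{ij} := q_{i,j}$ as the overlap point for the pair $(f_i,f_j)$. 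Using Urysohn's lemma on sufficiently small pairwise disjoint neighborhoods of these special points, construct $(f_n) \subseteq C(K)$ with $f_n(x_n)=1$ and $f_n(x_m)=0$ for $m\neq n$, with $f_i(y_{ij}) = f_j(y_{ij}) = \tfrac{1}{2}$ and $f_k(y_{ij}) = 0$ for $k\neq i,j$, and with $f_n \equiv 0$ outside its designated neighborhoods. The pointwise bound $\sum_m |f_m(x)| \leq 1$ on $K$ then delivers $\|\sum_n a_n f_n\|_\infty = \max_n |a_n|$, so $T(a) := \sum_n a_n f_n$ is an isometric embedding of $c_0$. Since $\||f_i|\wedge|f_j|\|_\infty \geq \tfrac{1}{2}$ at $y_{ij}$ for every $i\neq j$, a routine case analysis on arbitrary pairs $x,y$ in the closed span (splitting on whether the coordinates attaining the norms coincide or not, and reading off values at $x_n$ or $y_{ij}$) rules out almost disjoint pairs, and Theorem~\ref{thm: real stable phase retrieval} delivers SPR.

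The main obstacle is precisely the construction above: disjoint-support bases for $c_0$ trivially give the isometric embedding but destroy even plain PR, while overlapping supports endanger the pointwise $\ell_1$-bound essential for the $c_0$-isometry. The layered accumulation structure supplied by $K''\neq\varnothing$ is what lets one spread the pairwise overlap points across disjoint neighborhoods, so that at every point of $K$ at most two of the $f_n$'s are simultaneously nonzero, keeping $\sum_m|f_m|$ bounded by $1$ while retaining enough overlap in each pair to preclude almost disjoint pairs. A secondary technical subtlety, already flagged above, is the extraction of the two convergent sequences $p_n \to p$ and $q_{n,k}\to p_n$ in a possibly non-first-countable $K$, which is handled by reducing to a well-behaved closed subset before running the construction.
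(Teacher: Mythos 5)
Your implications (ii)$\Rightarrow$(iii) and (iii)$\Rightarrow$(i) are fine; the latter is in substance the paper's gliding-hump argument (\Cref{prop: gliding hump} together with \Cref{rem: case alpha=1}), carried out directly in $C(K)$ via $c_0(K\setminus K')$, and it works.

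The gap is in (i)$\Rightarrow$(ii), in the very existence of the functions $f_n$ you describe. You require $f_n(y_{nj})=\tfrac{1}{2}$ for \emph{every} $j>n$ while $f_n\equiv 0$ outside small pairwise disjoint neighbourhoods $W_{nj}$ of these points; such an $f_n$ cannot be continuous. Indeed, the infinite set $\{y_{nj}:j>n\}$ has an accumulation point $z$ in the compact space $K$; since each $W_{nj}$ meets the set only in $y_{nj}$, one checks $z\notin\bigcup_j W_{nj}$, so your prescription forces $f_n(z)=0$, whereas continuity forces $f_n(z)=\tfrac{1}{2}$ because every neighbourhood of $z$ contains points where $f_n=\tfrac{1}{2}$. (Equivalently: $f_n$ would be a sum of infinitely many disjointly supported bumps of fixed height $\tfrac{1}{2}$, which never converges in $C(K)$.) The repair --- and this is exactly the point of the paper's construction --- is to replace the infinitely many \emph{outgoing} spikes of $f_n$ by a single \emph{plateau}: a Urysohn function identically $\tfrac{1}{2}$ on a whole closed neighbourhood of $p_n$ containing all the overlap points $y_{nj}$, $j>n$, and vanishing outside $V_n$. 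Then each $f_n$ is a \emph{finite} sum of continuous functions (one peak, one plateau, and the $n-1$ incoming spikes at $y_{in}$, $i<n$, each supported inside the plateau of $f_i$); the pointwise bound $\sum_m|f_m|\leq 1$ and your $3$-SPR case analysis go through verbatim. This is precisely the role of the term $\frac{1}{2}\uno_{n+1}$, the indicator of an entire clopen block, in \Cref{prop: SPR copy of c0 in C0[0 w2]}, and of the Urysohn functions $g_n$ in \Cref{prop: implication K_alpha copy of Comegaalpha}. A side benefit is that your worry about convergent sequences in a non-first-countable $K$ disappears: one never needs $q_{n,k}\to p_n$ or $p_n\to p$, only infinitely many distinct points of $K'$ with pairwise disjoint neighbourhoods (obtained from Hausdorffness and regularity as in the paper) and infinitely many distinct points of $K$ inside each plateau (available since $p_n\in K'$). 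Your proposed detour through a metrizable closed subset is both unnecessary and delicate, since derived sets computed in a closed subspace need not agree with those computed in $K$.
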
 

In light of the previous result, the authors ask whether a \emph{large compact space $K$}, in terms of the smallest ordinal $\alpha$ for which $K^{(\alpha)}$ is non-empty, guarantees the existence of spaces \emph{bigger} than $c_0$ that can be embedded into $C(K)$ in an SPR way. More concretely, and taking into account the fact that $c_0$ is isomorphic to $c=C[1,\omega]$, they raise the following question:

\begin{question}{\cite[Question 6.4]{FOPT}}\label{question:timur's-question-0}
If $K^{(\alpha)}$ is infinite, does $C(K)$ contain an SPR copy of $C[0, \omega^\alpha]$? 
\end{question}

Recall that for any ordinal $\alpha$ we can consider the \emph{ordinal interval} $[0,\alpha]=\{\beta: 0\leq \beta \leq \alpha\}$. It can be shown that these sets, when equipped with the topology generated by the subbase of all sets of the form $[0,\beta_1)$ and $(\beta_2,\alpha]$, for $\beta_1,\beta_2\in [0,\alpha]$, are scattered compact Hausdorff spaces \cite[Corollary 8.6.7]{Semadeni}. Note that the interval $(\beta_1,\beta_2]$ is a clopen set in this topology for every $\beta_1<\beta_2$. Moreover, observe that $[0,\omega^\alpha]$ is homeomorphic to $[1,\omega^\alpha]$ whenever $\alpha\geq 1$, and hence the spaces $C[0,\omega^\alpha]$ and $C[1,\omega^\alpha]$ are lattice isometric. For our purposes, it will be convenient to use the notation $C[1,\omega^\alpha]$, and thus we we will do so throughout this work. Finally, recall that every ordinal $\alpha >0$ can be represented uniquely as
\begin{equation}\label{eq: cantor normal form}
    \alpha = w^{\beta_1}\cdot k_1+\ldots +\omega^{\beta_n}\cdot k_n,
\end{equation}
where $n\geq 1$, $\alpha \geq \beta_1>\ldots >\beta_n\geq 0$ and $k_1,\ldots, k_n$ are non-zero natural numbers. This representation is known as \textit{Cantor's normal form}.

With the aim of better understanding Question \ref{question:timur's-question-0}, we dedicate this article to analyzing the following problem:

\begin{main-question}\label{main-question}
Let $K$ be a compact Hausdorff space and let $\alpha$ be a finite ordinal. If $K^{(\alpha)}$ is non-empty, does $C(K)$ contain a subspace isometric to $C[1,\omega^\alpha]$ which does stable phase retrieval?     
\end{main-question}

We would like to clarify two important aspects regarding this new question. On the one hand, the \emph{isomorphic counterpart} of it has a trivial answer, since $C[1,\omega^\alpha]$ is isomorphic to $c_0$ whenever $1\leq \alpha<\omega$. Second, the \hyperref[main-question]{Main Question} is not a direct adaptation of Question \ref{question:timur's-question-0} to the isometric case: we are relaxing the hypothesis of $K^{(\alpha)}$ being infinite to simply $K^{(\alpha)}\neq\varnothing$.

\subsection{Summary of the results}
As previously mentioned, this article is devoted to studying the \hyperref[main-question]{Main Question}, which is deeply inspired by \Cref{thm: SPR subspace iif K' infinite}. \Cref{sec: two simple observations} contains two observations concerning our question. First, we recall that if a $C(K)$-space contains an isometric copy of $C[1,\omega^\alpha]$, then $K^{(\alpha)}\neq\varnothing$ and this justifies why we must assume (at least) the hypothesis that $K^{(\alpha)}$ is non-empty. Second, we prove that our problem has a negative solution when $\alpha=1$. This was already done in \cite[Theorem 6.1]{FOPT} (\Cref{thm: SPR subspace iif K' infinite} in this paper), but here we provide a generalization of that fact.

Next, in \Cref{sec: simplification}, we refine the techniques of \cite{RS} to prove that if $K^{(\alpha)}$ is non-empty, then we can build a \emph{nice} isometric embedding $T:C[1,\omega^\alpha]\to C(K)$ in the sense that it preserves SPR subspaces (\Cref{prop: implication K_alpha copy of Comegaalpha}). This result reduces the \hyperref[main-question]{Main Question} to studying SPR embeddings between spaces of the form $C[1,\omega^\alpha]$, which is what we will do in \Cref{sec: main results} and will enable us to present a complete answer to our question: 
\begin{itemize}
    \item Given $3\leq \alpha<\omega$, if $K^{(\alpha)}\neq\varnothing$, then $C(K)$ contains a subspace isometric to $C[1,\omega^\alpha]$ doing SPR (\Cref{coro: alpha>=3 Yes}). 
    \item For $\alpha=2$, we distinguish two cases. If $|K''|=1$, then $C[1,\omega^2]$ cannot be embedded isometrically into $C(K)$ in an SPR way (\Cref{prop: Cw2 cannot be SPR embedded into itself}). However, if $|K''|\geq 2$, we can find an isometric SPR embedding of $C[1,\omega^2]$ into $C(K)$ (\Cref{coro: |K''|>=2 Yes}).
\end{itemize}

\section{Some observations on the Main Question}\label{sec: two simple observations}

This section is devoted to stating two \emph{simple observations} regarding the \hyperref[main-question]{Main Question}:
\begin{enumerate}
    \item We start by recalling the necessity of assuming the hypothesis that $K^{(\alpha)}\neq\varnothing$. In particular, we prove that if $C(K)$ contains an isometric copy of $C[1,\omega^\alpha]$, then $K^{(\alpha)}\neq\varnothing$.
    \item Next, we will show that our question has a negative answer for $\alpha=1$. Even though this was already proven in \Cref{thm: SPR subspace iif K' infinite} \cite[Theorem 6.1]{FOPT}, here we will extend this fact to general Banach lattices (not necessarily $C(K)$-spaces).
\end{enumerate}

Recall that if $S$ is a topological space, $S'$ denotes the set of accumulation points (or non-isolated points) of $S$, also known as the \textit{derived set} of $S$. This operation over the set $S$ can be iterated $\alpha$ times for any ordinal $\alpha$, yielding the \textit{$\alpha$-th Cantor--Bendixson derivative of $S$}, denoted $S^{(\alpha)}$. A \textit{perfect set} is a (closed) non-empty set $S$ such that $S'=S$. If a topological space $S$ does not contain any perfect subset, it is called \textit{scattered}, and we denote by $ht(S)$ its \textit{height}, that is, the minimal ordinal $\alpha$ such that $S^{(\alpha)}=\varnothing$. It should be noted that if $K$ is a scattered compact space, then $ht(K)$ is a successor ordinal and $K^{(ht(K)-1)}$ is finite. For instance, if $\alpha$ is a countable ordinal and $m\in \N$, then $ht([1,\omega^\alpha \cdot m])=\alpha +1$ and $|[1,\omega^\alpha \cdot m]^{(\alpha)}|=m$.
\medskip

The first fact that we will need is a consequence of Holszty\'{n}ski's Theorem \cite{Holsztynski}, together with a simple topological observation:

\begin{theorem}\label{thm: copy of Comegaalpha implies Kalpha}
    Assume that the field of scalars is $\R$ or $\C$. Let $L$ be a scattered compact metric space and $K$ a compact Hausdorff space. If $C(L)$ isometrically embeds into $C(K)$, then $|L^{(ht(L)-1)}|\leq |K^{(ht(L)-1)}|$. 
\end{theorem}

In particular, if $C[1,\omega^\alpha]$ isometrically embeds into $C(K)$, then $K^{(\alpha)}$ must be non-empty.

\begin{proof}
    If $C(L)$ isometrically embeds into $C(K)$, by Holszty\'{n}ski's Theorem \cite{Holsztynski} (valid in both the real and the complex setting), there exists a closed subset of $K$ that maps continuously onto $L$. This, in turn, implies that $|L^{(\alpha)}|\leq |K^{(\alpha)}|$, since $L$ is scattered (cf. \cite[Corollary 3.4]{RS}).
\end{proof}
\begin{rem}\label{rem:PNPP}
The reverse implication also holds. Actually, it was proved in \cite[Theorem 4.1, (iv)$\Rightarrow$(i)]{RS} that the embedding $T:C(L)\rightarrow C(K)$ can be assumed to \textit{preserve norms of positive parts} (\emph{PNPP} for short), i.e., $\|(Tf)^+\|=\|f^+\|$ for every $f\in C(L)$, and hence, to be positive. We will elaborate on this fact in the next section.    
\end{rem}

For the second result, we need to recall some definitions. An \textit{atom} of a Banach lattice $X$ is a non-zero positive element $x_0\in X$ such that whenever $0\leq x\leq x_0$, it follows that $x=\lambda x_0$ for some positive scalar $\lambda$. The span of an atom is always a projection band and its corresponding band projection  $P_{x_0}:X\rightarrow X$ is given by $P_{x_0}x=\lambda_{x_0}(x)x_0$, $x\in X$, where $\lambda_{x_0}$ is a real-valued lattice homomorphism. This lattice homomorphism is called the \textit{coordinate functional} associated to $x_0$ (\cite[p. 8]{Lacey}, see also the paragraph preceding Proposition 2.1 in \cite{BGHMT}). The proof of the following result mimics the \emph{gliding hump argument} used in \cite[Theorem 6.1]{FOPT} to prove the \emph{necessary condition}.

\begin{prop}\label{prop: gliding hump}
    Assume that the field of scalars is $\R$ or $\C$. Let $X$ be a Banach lattice that contains an infinite-dimensional SPR subspace. Then, the closed span of the atoms of $X$ has infinite codimension in $X$.
\end{prop}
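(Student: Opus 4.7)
The plan is to argue by contradiction and mimic the gliding hump strategy from \cite[Theorem 6.1]{FOPT}, replacing the concrete use of isolated points in $K$ by band projections onto spans of atoms in the abstract setting of a Banach lattice. So suppose that $A$, the closed span of the atoms of $X$, has finite codimension, and let $E\subseteq X$ be an infinite-dimensional $C$-SPR subspace. Since the quotient $X/A$ is finite-dimensional, the image of $E$ in $X/A$ is finite-dimensional, so $E\cap A$ has finite codimension in $E$ and is in particular still infinite-dimensional. Passing to $E\cap A$, we may assume $E\subseteq A$; note that any subspace of a $C$-SPR subspace is itself $C$-SPR.

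By \Cref{thm: real stable phase retrieval}, to reach a contradiction it suffices to produce, for every $\varepsilon>0$, a normalized $\varepsilon$-almost disjoint pair in $E$. Fix $\varepsilon>0$ and pick any $x_1\in S_E$. Since $A$ is the closed span of the atoms, we can find finitely many atoms $a_1,\ldots, a_k$ and scalars $c_1,\ldots, c_k$ with $\bigl\| x_1-\sum_{i=1}^k c_i a_i\bigr\|<\varepsilon/2$. Let $P$ denote the band projection onto the (finite-dimensional) band generated by $a_1,\ldots,a_k$, namely $P=P_{a_1}+\cdots+P_{a_k}$ with $P_{a_i}$ as in the paragraph preceding the statement. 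The restriction $P|_E\colon E\to \spn(a_1,\ldots,a_k)$ has finite-dimensional range, so its kernel has finite codimension in $E$. Since $E$ is infinite-dimensional, we can pick $x_2\in S_E$ with $Px_2=0$.

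The key computation now uses two standard properties of band projections: they have norm at most $1$, and they commute with the absolute value, i.e., $P|x|=|Px|$ for all $x\in X$, with the bands $PX$ and $(I-P)X$ being disjoint complements. In particular, $P|x_2|=|Px_2|=0$, so $P|x_1|\wedge P|x_2|=0$, and then
\begin{align*}
|x_1|\wedge |x_2| &= \bigl(P|x_1|\wedge P|x_2|\bigr)+\bigl((I-P)|x_1|\wedge (I-P)|x_2|\bigr) \\
&\leq (I-P)|x_1| = \bigl|(I-P)x_1\bigr|.
\end{align*}
Taking norms and using that $(I-P)\bigl(\sum_{i=1}^k c_i a_i\bigr)=0$ and $\|I-P\|\leq 1$, we obtain
$$
\bigl\||x_1|\wedge|x_2|\bigr\| \leq \bigl\|(I-P)x_1\bigr\| = \Bigl\|(I-P)\bigl(x_1-\textstyle\sum_{i=1}^k c_i a_i\bigr)\Bigr\|<\varepsilon/2.
$$
Hence $(x_1,x_2)$ is a normalized $\varepsilon$-almost disjoint pair in $E$. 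Since $\varepsilon>0$ was arbitrary, $E$ contains $\tfrac{1}{C}$-almost disjoint pairs, contradicting \Cref{thm: real stable phase retrieval}.

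The only mildly delicate point is the algebraic identity decomposing $|x_1|\wedge|x_2|$ across the two complementary bands; everything else is bookkeeping. I do not expect any serious obstacle, since the argument in \cite[Theorem 6.1]{FOPT} already follows exactly this template in the special case $X=C(K)$, where the atoms correspond to isolated points of $K$ and the band projections are simply evaluations.
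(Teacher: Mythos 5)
Your proof is correct and follows essentially the same gliding hump argument as the paper: approximate a norm-one vector of $E\cap X_0$ by a finite linear combination of atoms, pick a second norm-one vector annihilated by the corresponding coordinate functionals (equivalently, by the band projection onto those atoms), and bound the infimum of the moduli by the approximation error to produce almost disjoint pairs, contradicting \Cref{thm: real stable phase retrieval}. The only cosmetic difference is that you phrase the disjointness estimate via the band decomposition $|x_1|\wedge|x_2|\leq (I-P)|x_1|$, whereas the paper uses the direct lattice inequality $|x|\wedge|z|\leq(|x-y|+|y|)\wedge|z|\leq|x-y|$; both are valid.
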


\begin{proof}
    Let $E\subseteq X$ be an infinite-dimensional SPR subspace, and assume that $X_0$, the closed span of the atoms of $X$, has finite codimension. It follows that $E_0=E\cap X_0$ has infinite dimension. Let $x\in S_{E_0}$ and $\eps>0$. There exists a linear combination of atoms $y=\sum_{k=1}^n \lambda_{x_k}(y)x_k$, where $(x_k)_{k=1}^n$ are atoms and $(\lambda_{x_k})_{k=1}^n$ their corresponding coordinate functionals, such that $\|x-y\|<\eps$. Again, $E_0\cap \intoo{\bigcap_{k=1}^n \ker \lambda_{x_k}}$ must be infinite-dimensional, so in particular we can find a norm-one vector $z$ in this subspace. Note that $z$ is disjoint from $y$ (i.e., supported on different atoms). Hence,
    \[|x|\wedge|z|\leq \bigl(|x-y|+|y|\bigr)\wedge |z|\leq |x-y|\wedge |z|+|y|\wedge|z|=|x-y|\wedge |z|\leq |x-y|,\]
    and $x$ and $z$ form a normalized $\eps$-almost disjoint pair in $E$. Since $\eps$ was arbitrary, \Cref{thm: real stable phase retrieval} (or \Cref{prop: failing complex SPR} in the complex setting), yields a contradiction.
\end{proof}

\begin{rem}\label{rem: case alpha=1}
Suppose that $X=C(K)$, for some compact Hausdorff space $K$, in the above proposition. Observe that an atom in $C(K)$ is precisely a multiple of the characteristic function of a singleton $\{t_0\}$, where $t_0$ is an isolated point of $K$. Therefore, the closed subspace generated by the atoms of $C(K)$ has infinite codimension in $C(K)$ if and only if $K'$ is infinite. Hence, we can use the last proposition to show implication $(iii)\Rightarrow(i)$ in \Cref{thm: SPR subspace iif K' infinite}. In particular, the \hyperref[main-question]{Main Question} has a negative answer for $\alpha=1$, as no infinite-dimensional subspace can be embedded in an SPR way into $c=C[1,\omega]$.
\end{rem}

\section{A simplification of the question}\label{sec: simplification}

\subsection{``Nice'' embeddings that preserve SPR subspaces}

It was established in \Cref{thm: copy of Comegaalpha implies Kalpha} that the existence of an isometric copy of $C[1,\omega^\alpha]$ inside $C(K)$ implies that $K^{(\alpha)}\neq\varnothing$. Conversely, as noted in \Cref{rem:PNPP}, if $K^{(\alpha)}\neq\varnothing$, then we can construct an isometric embedding of $C[1,\omega^\alpha]$ into $C(K)$ with additional properties, namely, being (positive and) PNPP. This is, in principle, a weaker conclusion than what is needed to answer the \hyperref[main-question]{Main Question}. However, the ideas of \cite{RS} can be refined to obtain an embedding with a property that is \emph{very convenient} for our purposes: preserving SPR subspaces (see \Cref{prop: implication K_alpha copy of Comegaalpha}). This will allow us to reduce the \hyperref[main-question]{Main Question} to the study of isometric SPR embeddings between spaces of the form $C[1,\omega^\alpha]$, which are easier to manipulate.

It should be noticed that an elementary example of embeddings that preserve SPR subspaces are precisely lattice embeddings (that is, those that preserve lattice operations). Indeed, suppose that $T:X\to Y$ is a lattice isometric embedding between two Banach lattices $X$ and $Y$, and let $E$ be a subspace of $X$ which does $C$-SPR. Then, for every $x,y\in E$, we have
\begin{align*}
    \frac{1}{\|T\|}\min_{|\lambda|=1}\|Tx-\lambda Ty\|&\leq \min_{|\lambda|=1}\|x-\lambda y\|\leq C\left\| |x|-|y|\right\|\\
    &\leq C \|T^{-1}\|\| T(|x|-|y|)\|=C\|T^{-1}\|\||Tx|-|Ty| \|,
\end{align*}
and hence $T(E)$ does $C\|T\|\|T^{-1}\|$-SPR in $Y$.

Although in general the condition $K^{(\alpha)}\neq\varnothing$ does not guarantee the existence of a lattice isometric embedding of $C[1,\omega^\alpha]$ into $C(K)$ (see \Cref{rem: counterexample non-existence lattice isomorphic embedding}), we will show in \Cref{prop: implication K_alpha copy of Comegaalpha} that it is still possible to construct a \emph{nice} isometric embedding, in the sense that it preserves SPR subspaces. The crucial property of these \emph{nice} embeddings will be property \eqref{property *} from the following proposition.

\begin{prop}\label{prop: Key-propertySPR}
    Let $K,L$ be compact Hausdorff spaces and $T:C(L)\to C(K)$ an operator with the following property: 
    \begin{equation}
        \text{\textit{for every }} t\in L \text{\textit{ there is }}s_t\in K \text{\textit{ such that }}Tf(s_t)=f(t) \text{\textit{ for every }}f\in C(L). \label{property *}\tag{\textasteriskcentered} 
    \end{equation}
    Then, if $E$ is an SPR subspace of $C(L)$, $T(E)$ is an SPR subspace of $C(K)$.
\end{prop}

\begin{proof}
Let us suppose that $E$ is a $C$-SPR subspace of $C(L)$ for some constant $C\geq 1$, that is, for any $f,g\in E$ we have
$$
\min_{|\lambda|=1}\|f-\lambda g\|\leq C\||f|-|g|\|.
$$
Given any $f,g\in C(L)$ and any $t\in L$,
$$
\bigl(|Tf|-|Tg|\bigr)(s_t)=|Tf(s_t)|-|Tg(s_t)|=|f(t)|-|g(t)|=\bigl(|f|-|g|\bigr)(t),
$$
and hence $\||f|-|g|\|\leq \||Tf|-|Tg|\|$. With this in mind, we obtain for any $f,g\in E$ the following:
$$
\min_{|\lambda|=1}\|Tf-\lambda Tg\|\leq \|T\|\min_{|\lambda|=1}\|f-\lambda g\|\leq C \|T\| \||f|-|g|\|\leq C\|T\| \||Tf|-|Tg|\|.
$$
This shows that $T(E)\subseteq C(K)$ does $C\|T\|$-SPR.
\end{proof}

 \begin{rem}\label{rem: property * implies PNPP}

Note that if an operator $T:C(L)\to C(K)$ has the property \eqref{property *} defined above, then $\|Tf\|\geq \|f\|$ for every $f\in C(L)$, so $T$ is an isomorphic embedding. In particular, if $\|T\|=1$, then $T$ is a linear isometry. If we further assume that $T$ is positive, then $T$ must be PNPP. Indeed, on the one hand, we have that for each $f\in C(L)$, $f\leq f^+$, so $(Tf)^+\leq T(f^+)$ and hence $\|(Tf)^+\|\leq \|T(f^+)\|=\|f^+\|$. On the other hand, property \eqref{property *} implies that $(Tf)^+(s_t)=f^+(t)$ for every $f\in C(L)$ and $t\in L$. Therefore,
\[\|(Tf)^+\|\geq \sup_{t\in L}|(Tf)^+(s_t)|=\sup_{t\in L}|f^+(t)|=\|f^+\|.\]

\end{rem}

\begin{rem}
It should be noted that \Cref{prop: Key-propertySPR} could also be stated (and proven in the same way) for locally compact Hausdorff spaces as follows: given locally compact Hausdorff spaces $K$ and $L$, then any operator $T:C_0(L)\to C_0(K)$ satisfying that
\begin{equation}
        \text{\textit{for every }} t\in L \text{\textit{ there is }}s_t\in K \text{\textit{ such that }}Tf(s_t)=f(t) \text{\textit{ for every }}f\in C_0(L) \tag{\textasteriskcentered} 
    \end{equation}
preserves SPR subspaces. As usual, $C_0(K)$ stands for 
$$
\bigl\{f\in C(K)\::\: \forall \varepsilon>0 \:\text{ }\exists \,G\subseteq K \text{ compact space s.t. } |f(t)|<\varepsilon \text{ }\:\forall t\notin G\bigr\}.
$$
\end{rem}

We can adapt the arguments from \cite{RS} to construct embeddings with the property~\eqref{property *}. To do so, we will need the following analogue of \cite[Proposition 3.10]{RS}. Hereby, we will say that an embedding $T:C(L)\rightarrow C(K)$ is \textit{supported by $U\subseteq K$} if $Tf$ is supported by $U$ for every $f\in C(L)$, that is, if $\text{supp}(f)=\overline{\{t\in K\::\: f(t)\neq 0\}}\subseteq U$ for all $f\in C(L)$. Moreover, for an infinite family of compact spaces $\{L_\gamma\}_{\gamma\in \Gamma}$, we will denote by $K_{\{L_\gamma\}_{\gamma\in \Gamma}}$ the one-point compactification of the disjoint union of the spaces $L_\gamma$, with the additional point simply denoted by $\infty$. If for some compact space $L$ we have $L_\gamma=L$ for every $\gamma\in \Gamma$, we will simply write $K_{\Gamma, L}$. It should be noted that for every ordinal $\alpha<\omega_1$, the ordinal interval $[1,\omega^\alpha]$ is homemomorphic to either $K_{\N,[1,\omega^\beta]}$ if $\alpha=\beta+1$, or to $K_{\{[1,\omega^{\beta_n}]\}_{n\in \N}}$ if $\alpha$ is a limit ordinal and $(\beta_n)_{n\in \N}$ is an increasing sequence converging to $\alpha$ (see for instance \cite[Lemma 3.12]{RS}).

\begin{prop}\label{prop: inductive step for emebddings}
    Let $K$ be a compact Hausdorff space, $U$ an open subset of $K$, and $h\in S_{C(K)}$ positive and supported by $U$. For an infinite family of compact spaces $\{L_\gamma\}_{\gamma\in \Gamma}$, let $\{U_\gamma\}_{\gamma\in \Gamma}$ be a family of pairwise disjoint nonempty open subsets of $K$ such that $U_\gamma\subseteq \{s\in K: h(s)=1\}$ for each $\gamma\in \Gamma$, but $\{s\in K: h(s)=1\} \setminus \intoo[3]{\bigcup_{\gamma\in \Gamma}U_\gamma}$ is nonempty. Assume that for each $\gamma\in \Gamma$ there is a positive isometric embedding $T_\gamma:C(L_\gamma)\rightarrow C(K)$ supported by $U_\gamma$ satisfying property \eqref{property *}.

    Then, there exists a positive isometric embedding $T:C(K_{\{L_\gamma\}_{\gamma\in \Gamma}})\to C(K)$ supported by $U$ satisfying property \eqref{property *}.
\end{prop}

\begin{proof}
    Let $g\in C(K_{\{L_\gamma\}_{\gamma\in \Gamma}})$ and let us decompose it as in \cite[Lemma 3.7]{RS}:
    \[g=g(\infty)\uno_{K_{\{L_\gamma\}_{\gamma\in \Gamma}}} + \sum_{\gamma\in \Gamma}g_\gamma,\]
    where $g_\gamma=g|_{L_\gamma}-g(\infty)\chi_{L_\gamma}$ for each $\gamma\in \Gamma$ and $(\|g_\gamma\|)_{\gamma\in \Gamma}\in c_0(\Gamma)$, so that the sum above is well defined (see the comment at the beginning of \cite[Section 3.2]{RS}). Then, we define $T$ as in \cite[Proposition 3.10]{RS} by
    \[Tg=g(\infty)h + \sum_{\gamma\in \Gamma}T_\gamma g_\gamma,\]
    which is well defined because the decomposition of $g$ in such a way is unique. Moreover, since the sets $(U_\gamma)_{\gamma\in \Gamma}$ on which each $T_\gamma g_\gamma$ is supported are pairwise disjoint and $(\|T_\gamma g_\gamma\|)_{\gamma\in \Gamma}=(\|g_\gamma\|)_{\gamma\in \Gamma}\in c_0(\Gamma)$, \cite[Lemma 3.6]{RS} yields that $Tg\in C(K)$. Clearly, $T$ is linear. Let us check that it satisfies property \eqref{property *}. Fix $t\in K_{\{L_\gamma\}_{\gamma\in \Gamma}}$. On the one hand, if $t\in L_\delta$ for some $\delta\in \Gamma$, then using the property \eqref{property *} of $T_\delta$ we can find $s_t\in U_\delta$ such that $T_\delta f(s_t)=f(t)$ for every $f\in C(L_\delta)$. Therefore, using that $T_\gamma g_\gamma$ vanishes on $U_\delta$ for every $\gamma\neq \delta$, $h(s_t)=1$ and $g_\delta=g|_{L_\delta}-g(\infty)\chi_{L_\delta}$, we obtain that
    \[Tg(s_t)=g(\infty)h(s_t) + \sum_{\gamma\in \Gamma}T_\gamma g_\gamma(s_t)= g(\infty) + T_\delta g_\delta(s_t)= g(\infty) +  g_\delta(t)=g(t).\]
    On the other hand, if $t=\infty$, we choose $s_t\in \{s\in K: h(s)=1\} \setminus \intoo[3]{\bigcup_{\gamma\in \Gamma}U_\gamma}$, so that $h(s_t)=1$ but $T_\gamma g_\gamma(s_t)=0$ for every $\gamma\in \Gamma$. It follows that $Tg(s_t)=g(\infty)$. Finally, by \Cref{rem: property * implies PNPP}, $T_\gamma$ is PNPP for every $\gamma\in \Gamma$. Therefore, \cite[Proposition 3.10]{RS} yields that $T$ is a PNPP isometric embedding. In particular, it is positive.
\end{proof}

Now, we are in disposition of proving \Cref{prop: implication K_alpha copy of Comegaalpha}, which will allow us to simplify the \hyperref[main-question]{Main Question} substantially.

\begin{prop}\label{prop: implication K_alpha copy of Comegaalpha}
    Assume that the field of scalars is $\R$ or $\C$. Let $K$ be a compact Hausdorff space, $U\subseteq K$ open, $\alpha<\omega_1$ a countable ordinal, and $m\in \N$. If $|U^{(\alpha)}|\geq m$, then there exists a positive isometric embedding $T:C[1,\omega^\alpha\cdot m]\to C(K)$ supported by $U$ with property \eqref{property *}. 
\end{prop}

\begin{proof}
    The proof is analogous to \cite[Proposition 3.13]{RS}, except for some minor adaptations. Let us first assume that $m=1$ and the field of scalars is $\R$. The base case $\alpha =0$ is simple. Let $h\in S_{C(K)}$ be a positive function supported by $U$. Then, $T:C(\{1\})\equiv \R\rightarrow C(K)$ given by $Ta=ah$ satisfies the desired properties.
    Now, let us assume that $\alpha>0$ and the statement holds for every $\beta <\alpha$. Let us choose $s_0\in U^{(\alpha)}$, an open neighborhood $V$ such that $s_0\in V\subseteq \overline{V}\subseteq U$, and a positive function $h\in S_{C(K)}$ supported by $U$ that takes the value $1$ in $\overline{V}$. Since $s_0\in U^{(\alpha)}\cap V\subseteq V^{(\alpha)}$, it follows that $ht(V)>\alpha$.
    
    If $\alpha=\beta+1$ is a successor ordinal, by \cite[Lemma 3.1(i)]{RS} we can find nonempty open sets $(U_n)_{n\in \N}$ with pairwise disjoint closures such that $\overline{U_n}\subseteq V$ and $ht(U_n)>\beta$ for each $n\in \N$. We can further assume that $s_0\notin U_n$ for every $n\in \N$. By the induction hypothesis, there exist positive isometric embeddings $T_n:C[1,\omega^\beta]\rightarrow C(K)$ supported by $U_n$ with property \eqref{property *}. Therefore, \Cref{prop: inductive step for emebddings} yields a positive isometric embedding $S:C(K_{\N,[1,\omega^\beta]})\rightarrow C(K)$ supported by $U$ with property \eqref{property *}. Since $[1,\omega^\alpha]$ is homeomorphic to $K_{\N,[1,\omega^\beta]}$, the corresponding spaces of continuous functions are lattice isometric via an operator of composition with this homeomorphism. Composing this lattice isometry with the embedding $S$, we obtain a positive isometric embedding $T:C[1,\omega^\alpha]\rightarrow C(K)$ supported by $U$ that also satisfies property \eqref{property *}.
    
    If $\alpha$ is a limit ordinal, we proceed in a similar way, using \cite[Lemma 3.1(ii)]{RS} instead.
    
    Now, if $m>1$, we can find $m$ distinct points $s_1,\ldots, s_m\in U^{(\alpha)}$ and open disjoint neighborhoods $s_i\in U_i\subseteq U$. Since $[1,\omega^\alpha]$ is homeomorphic to $[\omega^\alpha\cdot (i-1)+1,\omega^\alpha\cdot i]$ for each $i=1,\ldots, m$, we can find embeddings $T_i:C[\omega^\alpha\cdot (i-1)+1,\omega^\alpha\cdot i] \rightarrow C(K)$ with the desired properties, and combine them to obtain a global positive isometric embedding $T:C[1,\omega^\alpha \cdot m]\rightarrow C(K)$ supported by $U$ and satisfying property \eqref{property *}.
    
    Finally, let us extend this proof from real to \textbf{complex scalars}. We have shown that there is a positive isometric embedding $T:C([1,\omega^\alpha];\R)\to C(K;\R)$ supported by $U$ with the property \eqref{property *}. We claim that $T_{\mathbb{C}}:C([1,\omega^\alpha];\mathbb{C})\to C(K;\mathbb{C})$ defined by $T_\mathbb{C}(f+ig)=Tf+iTg$, for $f,g\in C([1,\omega^\alpha];\R)$ is a positive isometric $\mathbb{C}$-linear embedding with the desired properties. Recall that since $T$ is positive, we have $\|T_\mathbb{C}\|=\|T\|=1$ (see, for instance, \cite[Corollary 3.23]{AA-book}). It is clear that $T_\mathbb{C}$ satisfies the property \eqref{property *} and from this we deduce that
\begin{align*}
\sup_{s\in K} |T_\mathbb{C}(f+ig)|(s)&=\sup_{s\in K} \sqrt{|Tf(s)|^2+|Tg(s)|^2}\geq \sup_{t\in [1,\omega^\alpha]} \sqrt{|Tf(s_t)|^2+|Tg(s_t)|^2}    \\
&=\sup_{t\in [1,\omega^\alpha]} \sqrt{|f(t)|^2+|g(t)|^2}=\sup_{t\in[1,\omega^\alpha]}|f+ig|(t)=\|f+ig\|,
\end{align*}
and this shows that $T_\mathbb{C}$ is norm-preserving.
\end{proof}

\begin{rem}\label{rem: counterexample non-existence lattice isomorphic embedding}
    It should be noticed that the assumption $K^{(\alpha)}\neq \varnothing$ does not guarantee the existence of a lattice isomorphic (not necessarily isometric) embedding of $C[1,\omega^\alpha]$ into $C(K)$. 
    
    Indeed, suppose that there exists a lattice isomorphic embedding $T:c\to C[0,1]$. Let us write $f_n=Te_n$ for every natural number $n$. Observe that, as $T$ preserves lattice operations, $(f_n)_{n=1}^\infty$ is a pairwise disjoint positive sequence in $C[0,1]$. Moreover, 
    $$
    \frac{1}{\|T^{-1}\|}\leq \|f_n\|=\|Te_n\|\leq \|T\|, \quad \text{ for every } n\geq 1.
    $$
 Hence, for every $n$ there is $t_n\in[0,1]$ such that $f_n(t_n)\geq \frac{1}{\|T^{-1}\|}$. Passing to a subsequence, we may assume that $t_n\to t_0$ in $[0,1]$. It is clear that $f_n(t_0)=0$ for every $n$ since the $f_n$'s are pairwise disjoint. Moreover, as $T\uno(t_n)\geq Te_n(t_n)=f_n(t_n)\geq \frac{1}{\|T^{-1}\|}$, it follows by the continuity of $T\uno$ that $T\uno(t_0)\geq\frac{1}{\|T^{-1}\|}$. 
 
 Now, let $I$ be an open interval of $[0,1]$ such that $t_0\in I$ and $T\uno(t)>\frac{1}{2\|T^{-1}\|}$ for every $t\in I$, and let $N$ be such that $t_N\in I$. Observe that we have
 $$
 0<T\uno(t)=Te_N(t)+T(\uno-e_N)(t), \quad \text{ for every } t\in I,
 $$
 so $I=A_1\cup A_2$, where $A_1=\left\{t\in I\::\:Te_N(t)>0\right\}$ and $A_2=\left\{t\in I\::\:T(\uno-e_N)(t)>0\right\}$ are open subsets of $I$. Observe that $t_N\in A_1$ and $t_0\in A_2$ and, in addition, $A_1$ and $A_2$ are disjoint as $Te_N$ and $T(\uno-e_N)$ are (recall that $T$ preserves disjoint elements of $c$). But this is impossible, since then the interval $I$ would not be connected. A more general result can be found in \cite[Theorem C]{AGRT}.
\end{rem}

The previous proposition allows us to add one more condition to \cite[Theorem 4.1]{RS} when $L$ is a scattered compact metric space:

\begin{theorem}\label{thm: equivalence embedding CK}
    Let $L$ be a scattered compact metric space and $K$ a compact space. Then, $C(K)$ contains an isometric copy of $C(L)$ if and only if $C(K)$ contains a positive isometric copy of $C(L)$ with property \eqref{property *}. 
\end{theorem}
\begin{proof}
    The reverse implication is trivial. For the forward implication, it suffices to recall that every scattered compact metric space is countable, so it must be homeomorphic to some $[1,\omega^\alpha\cdot m]$, with $\alpha =ht(L)-1$ and $m=|L^{(\alpha)}|$. Then, by \cite[Theorem 4.1]{RS} we know that $C(K)$ contains an isometric copy of $C(L)$ if and only if $|L^{(\alpha)}|=m\leq |K^{(\alpha)}|$. Applying \Cref{prop: implication K_alpha copy of Comegaalpha} with $U=K$ we obtain the result.
\end{proof}

\subsection{Freeman-Oikhberg-Pineau-Taylor's proof revisited}
A careful look at the proof of \cite[Theorem 6.1]{FOPT} (\Cref{thm: SPR subspace iif K' infinite} here) reveals that the same ideas the authors use to construct an SPR copy of $c_0$ inside every $C(K)$-space with $K'$ infinite can also be applied to show that $C_0[1,\omega^2)=\{f\in C[1,\omega^2]\::\: f(\omega^2)=0\}$ contains an isometric copy of $c_0$ doing SPR. For completeness, we include below a detailed proof of this fact.

Throughout the proof, $\uno_{m_1}$ will represent the characteristic function of the clopen set $(\omega\cdot(m_1-1),\omega\cdot m_1]$ for any natural number $m_1\geq 1$. Analogously, given natural numbers $m_1,m_2\geq 1$, $\uno_{m_1,m_2}$ will stand for the characteristic of the singleton $\{\omega\cdot (m_1-1)+m_2\}$.

\begin{prop}\label{prop: SPR copy of c0 in C0[0 w2]}
    There exists an isometric SPR embedding of $c_0$ into $C_0[1,\omega^2)$.
\end{prop}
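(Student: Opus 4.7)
The plan is to construct an explicit positive, norm-one sequence $(f_n)_{n\geq 1}\subseteq C_0[1,\omega^2)$ for which the mapping $T\colon e_n \mapsto f_n$ extends to an isometric embedding of $c_0$ whose range contains no sufficiently almost disjoint pair; by \Cref{thm: real stable phase retrieval}, this forces $T(c_0)$ to do SPR. The two competing requirements driving the construction are, first, the pointwise bound $\sum_m |f_m(t)|\leq 1$ together with a ``peak'' point $t_n$ at which only $f_n$ is nonzero and equals $1$ (for the isometric equivalence with the $c_0$-basis), and second, for every pair $(n,m)$ a shared ``linking'' point $t_{n,m}$ where $f_n$ and $f_m$ both take value $1/2$ (to forbid almost disjoint pairs).

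Concretely, for each $n\geq 1$ I would let $f_n$ vanish outside $[1,\omega n]$ and take the following values inside: the peak $f_n(\omega(n-1)+1)=1$; $f_n(\omega(n-1)+j)=1/2$ for every $j\geq 2$, which by continuity forces $f_n(\omega n)=1/2$; and for each $k<n$ the linking value $f_n\bigl(\omega(k-1)+(n-k+1)\bigr)=1/2$, with $f_n$ vanishing at every other point of the $k$-th block (in particular at $\omega k$). Using the bijection $j \leftrightarrow m=n+j-1$ between $\{j\geq 2\}$ and $\{m>n\}$, the isolated point $\omega(n-1)+j$ is precisely the linking point $t_{n,m}$ for $m=n+j-1$, so at each such point exactly two of the $f_k$'s are nonzero (namely $f_n$ and $f_m$, each equal to $1/2$). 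Continuity at every $\omega k$ is immediate (in each block the values of $f_n$ are eventually constant equal to $f_n(\omega k)$), and $\{|f_n|\geq \varepsilon\}\subseteq [1,\omega n]$ for every $\varepsilon>0$ yields $f_n\in C_0[1,\omega^2)$. The pointwise bound $\sum_m|f_m(t)|\leq 1$ then holds by inspection (at each point at most two summands are nonzero and they sum to at most $1$), and combined with $f_n(t_n)=1$, $f_m(t_n)=0$ for $m\neq n$, it gives $\|\sum a_nf_n\|_\infty=\sup_n|a_n|$, so $T$ is an isometric embedding.

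For the SPR conclusion I would argue by contradiction. Suppose $f=\sum a_nf_n$ and $g=\sum b_nf_n$ lie in the unit sphere of $T(c_0)$ with $\||f|\wedge|g|\|<1/3$. Evaluating at each peak $t_n$ yields $\min(|a_n|,|b_n|)<1/3$, so the sets $A=\{n:|a_n|\geq 1/3\}$ and $B=\{n:|b_n|\geq 1/3\}$ are disjoint. Picking $N\in A$ with $|a_N|=1$ and $M\in B$ with $|b_M|=1$, we have $N\neq M$, and since $M\notin A$ forces $|a_M|<1/3$, evaluating at the linking point $t_{N,M}$ yields
\[
|f(t_{N,M})|=\frac{|a_N+a_M|}{2}\geq \frac{|a_N|-|a_M|}{2}>\frac{1-1/3}{2}=\frac{1}{3},
\]
and symmetrically $|g(t_{N,M})|>1/3$, contradicting $\||f|\wedge|g|\|<1/3$. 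Hence $T(c_0)$ contains no $c$-almost disjoint pair for any $c<1/3$, so by \Cref{thm: real stable phase retrieval} it does SPR.

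The main obstacle is reconciling three apparently incompatible requirements: each $f_n$ must have compact support (so as to lie in $C_0$), each pair $(f_n,f_m)$ must share a common point of fixed positive mass, and the pointwise sum $\sum_m|f_m|$ must stay $\leq 1$ (for the isometry). The decisive device is placing each linking point $t_{n,m}$ in the \emph{smaller} of the two relevant blocks: this simultaneously keeps the support of $f_n$ inside $[1,\omega n]$, fits countably many linking partners into a single block (one per isolated position $j\geq 2$), and, thanks to the disjointness of the positions $\omega(n-1)+(m-n+1)$ as $m$ varies, preserves $\sum_m|f_m(t)|\leq 1$ everywhere. This is the direct non-unital analogue of the gluing construction used in the proof of \cite[Theorem~6.1]{FOPT}.
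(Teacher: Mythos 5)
Your construction is correct and is essentially the paper's own proof up to a re-indexing: the paper's functions $x^{(n)}=\uno_{1,n}+\tfrac12\uno_{n+1}+\tfrac12\sum_{m=2}^n\uno_{m,n}$ have the same structure (a peak of height $1$, a half-height block, and one shared linking point of value $\tfrac12$ for each pair), and the SPR verification via the $1/3$ threshold and \Cref{thm: real stable phase retrieval} is the same argument. No substantive difference.
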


\begin{proof}
For every $n\in \N$, we define the continuous function on $[1,\omega^2]$ given by the expression
\[x^{(n)}=\uno_{1,n}+\frac{1}{2}\uno_{n+1}+\frac{1}{2}\sum_{m=2}^{n}\uno_{m,n}\]
(see \Cref{Fig: real SPR copy of c0}). It is clear that $x^{(n)}(\omega^2)=0$, as it is supported by $[1,\omega \cdot (n+1)]$.

\begin{figure}[!hbt]
\centering
\includegraphics[scale=1.8]{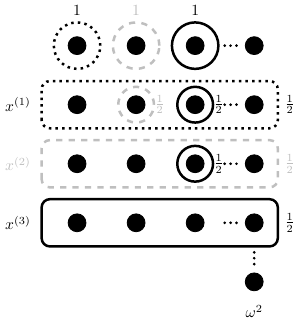}
\caption{\label{Fig: real SPR copy of c0} Representation of $x^{(n)}$. The points in $[1,\omega^2]$ are ordered from left to right and from the top to the bottom.}
\end{figure}

The sequence $(x^{(n)})_n$ spans an isometric copy of $c_0$. Indeed, given $(\alpha_n)_n\in c_{00}$ with $\bigvee_n|\alpha_n|=1$, it follows that $x=\sum_n\alpha_n x^{(n)}$ takes the value $\alpha_n$ at the point $n$ for every $n\in \N$, so $\|x\|\geq 1$. On the other hand, observe that $x$ vanishes outside of $[1,\omega\cdot (k+1)]$ for $k$ big enough. Moreover, the function $x$ takes the values $\alpha_n$ at the point $n$, $0$ at $\omega$, $\frac{1}{2}\alpha_n$ at $[\omega\cdot n+1, \omega\cdot n+n]\cup\{\omega\cdot (n+1)\}$, and $\frac{1}{2}(\alpha_n+\alpha_m)$ at $\omega \cdot n+m$ for every $m>n$ and $n\in \N$. In all the cases above, $|x|\leq 1$, so we can conclude that $\|x\|=1$.

Now, let us check that $E=\overline{\spn}\{x^{(n)}:n\in \N\}$ does real SPR. By \Cref{thm: real stable phase retrieval}, it suffices to show that $\||x|\wedge |y|||\geq \frac{1}{3}$ for every $x,y\in S_E$. Put $x=\sum_n\alpha_n x^{(n)}$ and $y=\sum_n\beta_n x^{(n)}$ for some $(\alpha_n)_n, (\beta_n)_n\in S_{c_0}$. Find $n,m\in \N$ such that $|\alpha_n|=1=|\beta_m|$. If $n=m$, then $|x|\wedge|y|(n)=1$. Otherwise, assume that $n<m$. If $|\alpha_m|\geq \frac{1}{3}$, then $|x|\wedge|y|(m) =|\alpha_m|\wedge |\beta_m|\geq \frac{1}{3}$, and the same can be done when $|\beta_n|\geq \frac{1}{3}$. Finally, if $|\alpha_m|, |\beta_n|<\frac{1}{3}$, it follows that $|\alpha_n+\alpha_m|, |\beta_n+\beta_m|\geq \frac{2}{3}$, so $|x|\wedge|y|(\omega \cdot n + m) \geq \frac{1}{3}$. Observe that by \Cref{thm: real stable phase retrieval} we can conclude that $E$ does 3-SPR inside $C(K)$.
\end{proof}

\Cref{prop: implication K_alpha copy of Comegaalpha}, together with the previous lemma, now yields the \emph{sufficiency} part of \cite[Theorem 6.1]{FOPT} (implication $(i)\Rightarrow (ii)$ in \Cref{thm: SPR subspace iif K' infinite}).

\begin{corr}
Let $K$ be a compact Hausdorff space. If $K'$ is infinite, then $C(K)$ contains an isometric copy of $c_0$ which does SPR.    
\end{corr}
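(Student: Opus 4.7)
The plan is to combine the two main results of this section via a simple topological observation. My first move would be to upgrade the hypothesis ``$K'$ is infinite'' to ``$K^{(2)} \neq \varnothing$''. Since the set of accumulation points of any topological space is closed, $K'$ is a closed subset of the compact Hausdorff space $K$, and hence compact itself. Being infinite and compact Hausdorff, $K'$ must admit an accumulation point, which by the closedness of $K'$ actually belongs to $K'$. By definition such a point lies in $(K')' = K^{(2)}$, so $K^{(2)} \neq \varnothing$.

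Once this is available, \Cref{prop: implication K_alpha copy of Comegaalpha} furnishes a positive linear isometry $T:C[1,\omega^2] \to C(K)$ enjoying property $(*)$, which (by \Cref{prop: Key-propertySPR}) guarantees that $T$ sends SPR subspaces to SPR subspaces. On the other hand, \Cref{prop: SPR copy of c0 in C0[0 w2]} produces an isometric SPR copy $E$ of $c_0$ sitting inside the (closed) sublattice $C_0[1,\omega^2) = \{f \in C[1,\omega^2] : f(\omega^2)=0\}$ of $C[1,\omega^2]$. Consequently $E$ is also an isometric SPR subspace of the full space $C[1,\omega^2]$.

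Applying $T$, the image $T(E) \subseteq C(K)$ is isometric to $c_0$ (because $T$ is a linear isometry) and still does SPR (because $T$ preserves SPR subspaces), which is exactly the desired conclusion. I do not anticipate any real obstacle in this argument: the two key ingredients --- the transfer machinery of \Cref{prop: implication K_alpha copy of Comegaalpha} and the explicit construction of an SPR copy of $c_0$ inside $C_0[1,\omega^2)$ in \Cref{prop: SPR copy of c0 in C0[0 w2]} --- already do all the work, and the corollary follows simply by gluing them together with the elementary topological fact that an infinite derived set inside a compact Hausdorff space forces the second Cantor--Bendixson derivative to be non-empty.
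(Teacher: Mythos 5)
Your proof is correct and follows exactly the same route as the paper's own argument: pass from ``$K'$ infinite'' to $K''\neq\varnothing$, apply \Cref{prop: implication K_alpha copy of Comegaalpha} to get an SPR-preserving isometric embedding of $C[1,\omega^2]$ into $C(K)$, and push forward the SPR copy of $c_0$ from \Cref{prop: SPR copy of c0 in C0[0 w2]}. The only difference is that you spell out the (standard) topological fact that an infinite derived set in a compact Hausdorff space forces $K^{(2)}\neq\varnothing$, which the paper simply asserts.
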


\begin{proof}
Suppose that $K'$ is infinite, which is equivalent to $K''$ being non-empty. By \Cref{prop: implication K_alpha copy of Comegaalpha}, there exists an isometric embedding $T:C[1,\omega^2]\to C(K)$ which preserves SPR subspaces. By \Cref{prop: SPR copy of c0 in C0[0 w2]}, $C_0[1,\omega^2)$ contains a subspace $E$ isometric to $c_0$ doing SPR. Since, $C_0[1,\omega^2)$ is a sublattice of $C[1,\omega^2]$, then $E$ also does SPR in $C[1,\omega^2]$. Therefore, $T(E)$ is an SPR subspace of $C(K)$ isometric to $c_0$.
\end{proof}

\section{Main results}\label{sec: main results}

In this section, we will provide a \emph{complete answer} to the \hyperref[main-question]{Main Question}: does the condition $K^{(\alpha)}\neq\varnothing$ (for $\alpha<\omega$) imply the existence of an isometric copy of $C[1,\omega^\alpha]$ inside $C(K)$ doing SPR? Note that it is trivially true for $\alpha=0$, while for $\alpha=1$ we already know that it is false (\Cref{rem: case alpha=1}); thus, we only need to study the cases where $2\leq \alpha<\omega$. Due to \Cref{prop: implication K_alpha copy of Comegaalpha}, this question is essentially reduced to the study of SPR embeddings between spaces of the form $C[1,\omega^\alpha]$, which is our next step.

 We start by proving the following lemma. Note that the fact that $C[1,\omega^\alpha]$ and $c_0$ are isomorphic for any $1\leq \alpha<\omega$ already follows from \cite[Theorem 1]{BP}, but the estimate for the isomorphism constant can be improved.
    
 \begin{lemma}
 Given $1\leq \alpha<\omega$, $C[1,\omega^\alpha]$ is linearly $2(\alpha+1)$-isomorphic to $c_0([1,\omega^\alpha])=c_0$.
 \end{lemma}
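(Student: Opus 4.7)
The plan is to construct an explicit isomorphism $T\colon C[1,\omega^\alpha]\to c_0([1,\omega^\alpha])$ via the telescoping difference coefficients associated to the natural tree structure on $[1,\omega^\alpha]$ coming from the Cantor normal form. Using the characteristic functions $\uno,\uno_{m_1},\uno_{m_1,m_2},\ldots,\uno_{m_1,\ldots,m_\alpha}$ introduced in the proof of \Cref{prop: copy of Comegaalpha implies Kalpha}, the points of $[1,\omega^\alpha]$ are in bijection with the nodes of this tree: the root corresponds to $\omega^\alpha$, and a multi-index $(m_1,\ldots,m_k)\in(\mathbb{N}_{\geq 1})^k$ of length $k\in\{1,\ldots,\alpha\}$ corresponds to the maximum
\[
\tau_k(m_1,\ldots,m_k):=\omega^{\alpha-1}(m_1-1)+\cdots+\omega^{\alpha-k+1}(m_{k-1}-1)+\omega^{\alpha-k}m_k
\]
of the clopen set on which $\uno_{m_1,\ldots,m_k}$ is supported (with $\tau_0:=\omega^\alpha$).

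First, I would define $Tf$ for $f\in C[1,\omega^\alpha]$ by the telescoping differences: $(Tf)_\varnothing:=f(\omega^\alpha)$ and
\[
(Tf)_{m_1,\ldots,m_k}:=f\bigl(\tau_k(m_1,\ldots,m_k)\bigr)-f\bigl(\tau_{k-1}(m_1,\ldots,m_{k-1})\bigr),\qquad 1\leq k\leq\alpha.
\]
Since every coefficient is a difference of two values of $f$, one immediately gets $\|Tf\|_\infty\leq 2\|f\|_\infty$, and the continuity of $f$ at the various limit ordinals $\tau_k(m_1,\ldots,m_k)$ (combined with a straightforward induction on the level $k$) ensures that for any $\varepsilon>0$ only finitely many coefficients can exceed $\varepsilon$ in modulus, so $Tf\in c_0([1,\omega^\alpha])$ and $\|T\|\leq 2$.

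Next, I would build the inverse as
\[
T^{-1}(c)\;:=\;c_\varnothing\,\uno\;+\;\sum_{k=1}^{\alpha}\;\sum_{(m_1,\ldots,m_k)}c_{m_1,\ldots,m_k}\,\uno_{m_1,\ldots,m_k}.
\]
At each fixed level $k$, the functions $\uno_{m_1,\ldots,m_k}$ are pairwise disjoint characteristic functions, so the inner series converges in sup norm by the disjoint-sum estimate together with $c\in c_0$; continuity of the limit at each $\tau_j(m_1,\ldots,m_j)$ reduces to the statement that $c_{m_1,\ldots,m_j,n}\to 0$ as $n\to\infty$, which is again part of $c\in c_0$. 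Evaluating $T^{-1}(c)$ at an arbitrary $\beta\in[1,\omega^\alpha]$ picks out exactly one coefficient from each of the $\alpha+1$ levels (namely those along the unique chain of nested clopen sets through $\beta$), and telescoping yields
\[
\bigl|(T^{-1}c)(\beta)\bigr|\;\leq\;(\alpha+1)\|c\|_\infty,
\]
so $\|T^{-1}\|\leq\alpha+1$. A direct pointwise evaluation shows $T$ and $T^{-1}$ are mutually inverse, and multiplying the two norm estimates gives $\|T\|\|T^{-1}\|\leq 2(\alpha+1)$, as desired.

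The main technical obstacle is essentially bookkeeping: managing the nested multi-indices cleanly and verifying convergence of the double series defining $T^{-1}c$ at each of the countably many limit ordinals in $[1,\omega^\alpha]$. Both difficulties dissolve once one notices that the tree has depth $\alpha+1$ and that membership in $c_0$ forces the coefficients to decay to zero along every branch.
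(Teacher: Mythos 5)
Your construction is correct and is essentially the paper's own proof: your telescoping coefficients $(Tf)_{m_1,\ldots,m_k}=f(\tau_k)-f(\tau_{k-1})$ are exactly twice the paper's coefficients $a(t)=\tfrac12\bigl(f(t)-f(\ell(t))\bigr)$, since $\ell(\tau_k(m_1,\ldots,m_k))=\tau_{k-1}(m_1,\ldots,m_{k-1})$, and your bound $\|T\|\,\|T^{-1}\|\leq 2(\alpha+1)$ comes from the same depth-$(\alpha+1)$ telescoping identity the paper uses for its lower estimate. The only (welcome) difference is that you write down the inverse explicitly and thereby check surjectivity, which the paper leaves implicit.
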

\begin{proof}
    Let us establish some notation. Given an ordinal $1\leq t<\omega^\alpha$, we can express it using Cantor's normal form \eqref{eq: cantor normal form} as
    \[t=\sum_{0\leq \beta<\alpha} \omega^\beta\cdot k_\beta, \quad \text{where } (k_\beta)_\beta\in (\N\cup\{0\})^\alpha\setminus \{0\}^\alpha.\]
    We define $\beta_0(t)=\min\{0\leq \beta<\alpha: k_\beta\neq 0\}$ (note that for every $t$ this set of ordinals is non-empty, as $(k_\beta)_\beta$ is non-zero, so by the well-ordering principle of ordinal numbers it has a minimum), and $\ell:[1,\omega^\alpha]\rightarrow [1,\omega^\alpha]$ by
    \[\ell(t)=\left\{
    \begin{array}{ll}
        \sum_{\beta_0(t)+1< \beta<\alpha} \omega^\beta\cdot k_\beta + \omega^{\beta_0(t)+1}\cdot (k_{\beta_0(t)+1}+1) & \text{ if }1\leq t<\omega^\alpha,\\
        \omega^\alpha &  \text{ if } t=\omega^\alpha
    \end{array}
    \right.\]
    (it is understood that $k_\alpha=0$ for $t<\omega^\alpha$). Note that for every $1\leq t<\omega^\alpha$, $t$ belongs to the $\beta_0(t)$-derivative of $[1,\omega^\alpha]$, but not to the $(\beta_0(t)+1)$-derivative, and $\ell(t)$ belongs to the $(\beta_0(t)+1)$-derivative, but not to the $(\beta_0(t)+2)$-derivative. Next, we define the following operator
    \[\fullfunction{R}{C[1,\omega^\alpha]}{c_0([1,\omega^\alpha])}{f}{Rf=(a(t))_t,}\]
    where $a(\omega^\alpha)=\frac{1}{2}f(\omega^\alpha)$ and $a(t)=\frac{1}{2}\bigl(f(t)-f(\ell(t))\bigr)$ for every $1\leq t<\omega^\alpha$. Clearly $R$ is well-defined and contractive. Moreover, for every $f\in S_{C[1,\omega^\alpha]}$, $\|Rf\|\geq \frac{1}{2(\alpha+1)}$. Indeed, for every $1\leq t<\omega^\alpha$, we have that $\ell^{\alpha-\beta_0(t)}(t)=\omega^\alpha$ (where $\ell^{\alpha-\beta_0(t)}$ represents the composition of $\ell$ with itself $\alpha-\beta_0(t)$ times), and we can write
    \[f(t)=2\sum_{j=0}^{\alpha-\beta_0(t)} a(\ell^j(t))\]
    for every $f\in C[1,\omega^\alpha]$. Therefore, if there were some $f\in S_{C[1,\omega^\alpha]}$ such that $\|Rf\|< \frac{1}{2(\alpha+1)}$, there would exist some $t\in [1,\omega^\alpha]$ with $|f(t)|=1$, so
    \[1=|f(t)|\leq 2 \sum_{j=0}^{\alpha-\beta_0(t)} |a(\ell^j(t))| < \frac{\alpha-\beta_0(t)+1}{\alpha +1}\leq 1,\]
    which leads to a contradiction. Hence, the proof of the claim is concluded.
\end{proof}

Next, we exploit this isomorphism, together with the SPR embedding of $c_0$ into $C_0[1,\omega^2)$ from \Cref{prop: SPR copy of c0 in C0[0 w2]}, to show the following:

\begin{prop}\label{prop: SPR embedding of Cwn into Cwncupw2}
    Given a finite ordinal $\alpha\geq 2$, there exists an isometric SPR embedding of $C[1,\omega^\alpha]$ into $ C_0[1,\omega^2)\oplus_\infty C[1,\omega^\alpha]$.
\end{prop}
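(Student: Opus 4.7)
The plan is to combine the $3$-SPR embedding $S\colon c_0 \hookrightarrow C_0[1,\omega^2)$ provided by \Cref{prop: SPR copy of c0 in C0[0 w2]} with the contractive isomorphism $R\colon C[1,\omega^\alpha] \to c_0$ coming from the preceding lemma (which satisfies $\|R\|\leq 1$ and $\|Rf\|\geq \frac{1}{2(\alpha+1)}\|f\|$) via the graph-type map
$$
T\colon C[1,\omega^\alpha]\longrightarrow C_0[1,\omega^2)\oplus_\infty C[1,\omega^\alpha], \qquad Tf := (SRf,\,f).
$$
The second coordinate is there to force isometry: since $S$ is isometric and $R$ is contractive, $\|Tf\| = \max(\|SRf\|,\|f\|) = \max(\|Rf\|,\|f\|) = \|f\|$. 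The first coordinate, which already lives inside a space where SPR has been established, will be used to transport SPR to the image of $T$.

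For the SPR estimate I work in the real setting, so $\lambda$ ranges over $\{-1,+1\}$. For each such sign, the identity
$$
\|Tf-\lambda Tg\| = \max\bigl(\|SRf - \lambda SRg\|,\,\|f-\lambda g\|\bigr) = \max\bigl(\|Rf-\lambda Rg\|,\,\|f-\lambda g\|\bigr) = \|f-\lambda g\|
$$
holds, and consequently $\min_{\lambda=\pm 1}\|Tf-\lambda Tg\| = \min_{\lambda=\pm 1}\|f-\lambda g\|$. On the other hand, bounding $\bigl\||Tf|-|Tg|\bigr\|$ from below by its first coordinate and invoking \Cref{thm: real stable phase retrieval} together with the $3$-SPR of $S(c_0)$ furnished by the proof of \Cref{prop: SPR copy of c0 in C0[0 w2]} yields
$$
\bigl\||Tf|-|Tg|\bigr\|\;\geq\;\bigl\||SRf|-|SRg|\bigr\|\;\geq\;\tfrac{1}{3}\min_{\lambda=\pm 1}\|SRf-\lambda SRg\|\;=\;\tfrac{1}{3}\min_{\lambda=\pm 1}\|Rf-\lambda Rg\|.
$$
Applying the lower bound on $R$ to each of the two vectors $f-\lambda g$ gives $\|Rf-\lambda Rg\|\geq \frac{1}{2(\alpha+1)}\|f-\lambda g\|$, and chaining the inequalities produces $\min_{\lambda=\pm 1}\|Tf-\lambda Tg\|\leq 6(\alpha+1)\bigl\||Tf|-|Tg|\bigr\|$, so $T(C[1,\omega^\alpha])$ does $6(\alpha+1)$-SPR.

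The argument is essentially formal once both ingredients are on the table, and I do not foresee any serious analytical obstacle. The conceptual point worth emphasizing is that the graph-type construction cleanly decouples the two requirements: the identity copy in the second coordinate provides isometry for free, while SPR is transported through $R$ at the controlled cost $2(\alpha+1)$ coming from its bounded inverse. The only place where some care is needed is checking that the minimum over unimodular $\lambda$ interacts well with both coordinates of the $\ell_\infty$-sum, which in the real setting reduces to the two cases $\lambda=\pm 1$ and the max-trick above.
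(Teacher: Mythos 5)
Your proposal is correct and follows essentially the same route as the paper: the same graph-type map $Tf=(SRf,f)$ built from the isomorphism $R$ of the preceding lemma and the $3$-SPR embedding $S$ of \Cref{prop: SPR copy of c0 in C0[0 w2]}, with the same resulting constant $6(\alpha+1)$. The only (cosmetic) difference is that you verify SPR directly from Definition \ref{defi: SPR in BL} by chaining the inequalities through $R$ and $S$, whereas the paper bounds $\||Tf|\wedge|Tg|\|$ from below for normalized $f,g$ and invokes the almost-disjoint-pair characterization of \Cref{thm: real stable phase retrieval}; both verifications are routine and equivalent here.
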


\begin{proof}
  Fix any $2\leq \alpha<\omega$. By the previous lemma, there exists a $2(\alpha+1)$-isomorphism $R:C[1,\omega^\alpha]\to c_0$. Replacing $R$ by $\frac{R}{\|R\|}$ and $R^{-1}$ by $\|R\|R^{-1}$ if necessary, we may assume that $\|R\|\leq 1$. Thus, $\|R^{-1}\|\leq 2(\alpha+1)$. Now, consider the isometric $3$-SPR embedding $S:c_0\rightarrow C_0[1,\omega^2)$ constructed in \Cref{prop: SPR copy of c0 in C0[0 w2]}, and define the operator
    \[\fullfunction{T}{C[1,\omega^\alpha]}{C_0[1,\omega^2)\oplus_\infty C[1,\omega^\alpha]}{f}{Tf=(S\circ Rf,f).}\]
    Clearly, $T$ is an isometric embedding, as it is contractive and $\|Tf\|\geq \|f\|$. To conclude, we will use \Cref{thm: real stable phase retrieval} to show that $T(C[1,\omega^\alpha])$ does $6(\alpha+1)$-SPR in $C_0[1,\omega^2)\oplus_\infty C[1,\omega^\alpha]$. Given $f,g\in S_{C[1,\omega^\alpha]}$, then $\|Rf\|,\|Rg\|\geq \frac{1}{\|R^{-1}\|}$, so we have
    $$
        \left\||Tf|\land |Tg| \right\|_\infty\geq \left\||SRf|\land |SRg| \right\|_\infty\geq\frac{1}{\|R^{-1}\|}\left\|\left|S\left(\frac{Rf}{\|Rf\|}\right)\right|\land \left|S\left(\frac{Rg}{\|Rg\|}\right)\right| \right\|_\infty 
        \geq  \frac{1}{6(\alpha+1)}.
   $$ 
\end{proof}

The topological properties of the ordinal intervals yield the following corollaries:

\begin{corr}\label{coro: SPR embedding of Cwalpha into Cwalpha}
    Given a finite ordinal $\alpha\geq 3$, there exists an isometric SPR embedding of $C[1,\omega^\alpha]$ into $C[1,\omega^\alpha]$.
\end{corr}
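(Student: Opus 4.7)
The plan is to absorb the $C_0[1,\omega^2)$ factor appearing in \Cref{prop: SPR embedding of Cwn into Cwncupw2} into $C[1,\omega^\alpha]$ itself. More precisely, I intend to construct a lattice isometric embedding
\[C_0[1,\omega^2)\oplus_\infty C[1,\omega^\alpha] \;\hookrightarrow\; C[1,\omega^\alpha],\]
and then compose it with the isometric SPR embedding provided by \Cref{prop: SPR embedding of Cwn into Cwncupw2}. Since lattice isometric embeddings preserve SPR subspaces (as noted in the discussion following \Cref{prop: Key-propertySPR}), the composition will then give the desired isometric SPR self-embedding.

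The key point is a topological decomposition of $[1,\omega^\alpha]$. By ordinal arithmetic, $\omega^2+\omega^\alpha=\omega^\alpha$ whenever $\alpha\geq 3$, so the map $\gamma\mapsto\omega^2+\gamma$ is an order isomorphism from $(0,\omega^\alpha]$ onto $(\omega^2,\omega^\alpha]$, and hence a homeomorphism for the order topologies. Since $[1,\omega^\alpha]$ is the disjoint union of the clopen sets $[1,\omega^2]$ and $(\omega^2,\omega^\alpha]$, the natural restriction map furnishes a lattice isometric identification
\[C[1,\omega^\alpha] \;\cong\; C[1,\omega^2]\oplus_\infty C(\omega^2,\omega^\alpha] \;\cong\; C[1,\omega^2]\oplus_\infty C[1,\omega^\alpha].\]
Moreover, $C_0[1,\omega^2)=\{f\in C[1,\omega^2]:f(\omega^2)=0\}$ is a closed sublattice of $C[1,\omega^2]$ with the inherited norm and order, so the inclusion is a lattice isometric embedding. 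Combining these two facts provides the embedding displayed above.

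Composing this lattice isometric embedding with the one from \Cref{prop: SPR embedding of Cwn into Cwncupw2} yields an isometric embedding of $C[1,\omega^\alpha]$ into itself whose image remains $6(\alpha+1)$-SPR. No genuine obstacle appears along the way: the entire analytic content is already concentrated in \Cref{prop: SPR embedding of Cwn into Cwncupw2}, and the only delicate point is the ordinal identity $\omega^2+\omega^\alpha=\omega^\alpha$, which is precisely what forces the hypothesis $\alpha\geq 3$ and, in light of the negative result announced in the introduction, suggests that the case $\alpha=2$ is genuinely different.
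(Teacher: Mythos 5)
Your proof is correct and takes essentially the same route as the paper: the published argument likewise identifies $C[1,\omega^2]\oplus_\infty C[1,\omega^\alpha]$ with $C[1,\omega^\alpha]$ via the clopen decomposition $[1,\omega^\alpha]=[1,\omega^2]\sqcup(\omega^2,\omega^\alpha]$ and then composes with the embedding of \Cref{prop: SPR embedding of Cwn into Cwncupw2}, using that $C_0[1,\omega^2)$ sits inside $C[1,\omega^2]$ as a sublattice. Your explicit justification of the homeomorphism $(\omega^2,\omega^\alpha]\cong[1,\omega^\alpha]$ through the ordinal identity $\omega^2+\omega^\alpha=\omega^\alpha$ for $\alpha\geq 3$ merely spells out what the paper leaves implicit.
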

\begin{proof}
    It suffices to observe that the following spaces are lattice isometric (here, $\sqcup$ denotes a disjoint union):
    \[C[1,\omega^2]\oplus_\infty C[1,\omega^\alpha] =C([1,\omega^2]\sqcup[1,\omega^\alpha])=C([1,\omega^2]\cup(\omega^2,\omega^\alpha])=C[1,\omega^\alpha].\qedhere\]
\end{proof}

\begin{corr}\label{coro: SPR embedding of Cw2 into C2w2}
    There exists an isometric SPR embedding of $C[1,\omega^2]$ into $C[1,\omega^2\cdot~2]$.
\end{corr}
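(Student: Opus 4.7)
The plan is to reduce this to Proposition \ref{prop: SPR embedding of Cwn into Cwncupw2} with $\alpha = 2$, mimicking the strategy of Corollary \ref{coro: SPR embedding of Cwalpha into Cwalpha} but accounting for the fact that, in contrast to the case $\alpha \geq 3$, we have $\omega^2 + \omega^2 \neq \omega^2$, so we cannot absorb an extra $C[1,\omega^2]$ factor into $C[1,\omega^2]$ itself. Instead, we will accommodate this factor in $C[1,\omega^2\cdot 2]$.

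First, I would decompose the ordinal interval $[1,\omega^2\cdot 2]$ as the disjoint union of the two clopen pieces $[1,\omega^2]$ and $(\omega^2, \omega^2\cdot 2]$. The second piece is order-isomorphic (hence homeomorphic) to $[1,\omega^2]$ via the map $\omega^2+\beta \mapsto \beta$, and so
\[
C[1,\omega^2\cdot 2] \;=\; C\bigl([1,\omega^2]\sqcup (\omega^2,\omega^2\cdot 2]\bigr) \;\cong\; C[1,\omega^2]\oplus_\infty C[1,\omega^2]
\]
lattice-isometrically.

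Next, by Proposition \ref{prop: SPR embedding of Cwn into Cwncupw2} applied to $\alpha = 2$, there exists an isometric SPR embedding
\[
T: C[1,\omega^2]\longrightarrow C_0[1,\omega^2)\oplus_\infty C[1,\omega^2].
\]
Viewing $C_0[1,\omega^2)$ as the closed sublattice of $C[1,\omega^2]$ consisting of those continuous functions that vanish at $\omega^2$, the canonical inclusion $\iota: C_0[1,\omega^2)\hookrightarrow C[1,\omega^2]$ is a lattice isometric embedding. Hence $(\iota\oplus \mathrm{id})\circ T$ is an isometric embedding of $C[1,\omega^2]$ into $C[1,\omega^2]\oplus_\infty C[1,\omega^2]\cong C[1,\omega^2\cdot 2]$.

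Finally, it remains to check that the SPR property survives the composition. This is automatic because taking a sublattice preserves SPR: for any $f,g$ in a closed sublattice $Y\subseteq Z$, the modulus $|f|\wedge |g|$ and the norms agree whether computed in $Y$ or in $Z$, so the characterization in Theorem \ref{thm: real stable phase retrieval} transfers. There is no essential obstacle here; the only point to watch is the ordinal-arithmetic identification in the first step, which is where the cases $\alpha = 2$ and $\alpha \geq 3$ diverge.
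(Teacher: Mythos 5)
Your proof is correct and follows essentially the same route as the paper: the paper's proof of this corollary is precisely the observation that $C_0[1,\omega^2)\oplus_\infty C[1,\omega^2]$ sits inside $C[1,\omega^2]\oplus_\infty C[1,\omega^2]=C[1,\omega^2\cdot 2]$ as a sublattice, combined with \Cref{prop: SPR embedding of Cwn into Cwncupw2} for $\alpha=2$. Your additional remark that SPR survives passage to a larger sublattice (since $\||f|\wedge|g|\|$ is computed identically) is exactly the implicit step the paper relies on.
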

\begin{proof}
    Just observe that
    \[C_0[1,\omega^2)\oplus_\infty C[1,\omega^2]\subseteq C[1,\omega^2]\oplus_\infty C[1,\omega^2]=C[1,\omega^2\cdot 2].\qedhere\]
\end{proof}

Finally, using \Cref{prop: implication K_alpha copy of Comegaalpha} we obtain the main results of the paper:

\begin{theorem}\label{coro: alpha>=3 Yes}
Let $K$ be a compact Hausdorff space with $K^{(\alpha)}\neq \varnothing$ for some $3\leq\alpha<\omega$. Then there is an isometric SPR embedding of $C[1,\omega^\alpha]$ into $C(K)$.  
\end{theorem}
\begin{proof}
Suppose that $K^{(\alpha)}\neq \varnothing$ for some $3\leq\alpha<\omega$. By \Cref{prop: implication K_alpha copy of Comegaalpha}, there exists an isometric embedding $T:C[1,\omega^\alpha]\to C(K)$ which preserves the SPR subspaces of $C[1,\omega^\alpha]$. Since, by \Cref{coro: SPR embedding of Cwalpha into Cwalpha}, $C[1,\omega^\alpha]$ embeds isometrically into itself in an SPR way, we infer that $C[1,\omega^\alpha]$ embeds isometrically into $C(K)$ in an SPR way.    
\end{proof}

\begin{theorem}\label{coro: |K''|>=2 Yes}
If $K$ is a compact Hausdorff space with $|K''|\geq 2$, then there is an isometric SPR embedding $T: C[1,\omega^2]\rightarrow C(K)$.
\end{theorem}
\begin{proof}

By \Cref{prop: implication K_alpha copy of Comegaalpha} we know that if $|K''|\geq 2$, there exists a positive linear isometric embedding of $C[1,\omega^2\cdot 2]$ into $C(K)$ with the property~\eqref{property *}. In particular, $T$ preserves the SPR subspaces of $C[1,\omega^2\cdot 2]$. Thus, we have that $C[1,\omega^2]$ can be embedded isometrically in an SPR way into any $C(K)$-space with $|K''|\geq 2$.    
\end{proof}
 
In view of the preceding corollaries, one might wonder whether it is possible to embed $C[1,\omega^2]$ into itself in an SPR way, as this is the case for $C[1,\omega^\alpha]$ when $\alpha>2$. We will see next that this is not possible.

\begin{prop}\label{prop: Cw2 cannot be SPR embedded into itself}
    Assume that the field of scalars is $\R$ or $\C$. $C[1,\omega^2]$ cannot be embedded isometrically into a $C(K)$-space with $|K''|=1$ in an SPR way.
\end{prop}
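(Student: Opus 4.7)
The plan is to assume for contradiction that there exists an isometric embedding $T:C[1,\omega^2]\to C(K)$ with $T(C[1,\omega^2])$ doing SPR and $|K''|=1$; write $K''=\{t_0\}$. The aim is to produce $\delta$-almost disjoint pairs in $T(C[1,\omega^2])$ for every $\delta>0$, contradicting \Cref{thm: real stable phase retrieval} (respectively its complex analogue from \cite{CGH}). For every $k\in K$, let $\mu_k\in M([1,\omega^2])$ be the representing measure, so $Tf(k)=\int f\,d\mu_k$ and $\|\mu_k\|\leq 1$; the map $k\mapsto \mu_k$ is weak*-continuous. Running the construction of \Cref{prop: copy of Comegaalpha implies Kalpha} with $\alpha=2$ yields points $t_{m_1,m_2}\in K$ at which $T\uno_{m_1,m_2}(t_{m_1,m_2})=\theta_{m_1,m_2}$ is unimodular. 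A short norm-estimate based on $\|f\pm\uno_{m_1,m_2}\|\leq 1$, valid for any $f$ with $\|f\|\leq 1$ vanishing at the associated isolated point, upgrades this to $\mu_{t_{m_1,m_2}}=\theta_{m_1,m_2}\,\delta_{\omega(m_1-1)+m_2}$. Taking successive weak*-limits (first as $m_2\to\infty$, then as $m_1\to\infty$, along subsequences on which the unimodular scalars converge) and invoking the uniqueness of the point in $K''$ produces $\mu_{t_0}=\theta\,\delta_{\omega^2}$ for some unimodular $\theta$.

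The crucial step is the concentration estimate $|\mu_k|([1,\omega])\to 0$ as $k\to t_0$. Since $\|\mu_k\|\leq 1$ and $\|\mu_{t_0}\|=1$, weak*-lower semi-continuity of the total-variation norm forces $\|\mu_k\|\to 1$. Using the clopen decomposition $[1,\omega^2]=[1,\omega]\sqcup(\omega,\omega^2]$, we obtain $\|\mu_k\|=\|\mu_k|_{[1,\omega]}\|+\|\mu_k|_{(\omega,\omega^2]}\|$. The restricted measure $\mu_k|_{(\omega,\omega^2]}$ still converges weak*${}$ to $\theta\,\delta_{\omega^2}$, of norm $1$, so the same LSC argument yields $\|\mu_k|_{(\omega,\omega^2]}\|\to 1$; subtracting, $|\mu_k|([1,\omega])=\|\mu_k|_{[1,\omega]}\|\to 0$.

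Now fix $\eps>0$ and pick an open neighborhood $V$ of $t_0$ with $|\mu_k|([1,\omega])<\eps$ for every $k\in V$. Let $E=\{f\in C[1,\omega^2]:f|_{[\omega,\omega^2]}=0\}$, a sublattice lattice-isometric to $c_0$. For any $f\in E$ with $\|f\|\leq 1$, since $\mathrm{supp}(f)\subseteq[1,\omega]$, we have $|Tf(k)|\leq |\mu_k|([1,\omega])<\eps$ on $V$; combined with $\|Tf\|_K=\|f\|=1$, this shows $\|Tf\|_{K\setminus V}=1$, so the restriction map $R:E\to C(K\setminus V)$, $f\mapsto Tf|_{K\setminus V}$, is an isometric embedding. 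Because $t_0\in V$ and $K''=\{t_0\}$, we have $(K\setminus V)''=\varnothing$, so $(K\setminus V)'$ is a closed discrete, hence finite, subset of the compact space $K\setminus V$.

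By \Cref{thm: SPR subspace iif K' infinite}, $C(K\setminus V)$ contains no infinite-dimensional SPR subspace, so the infinite-dimensional subspace $R(E)$ is not SPR in $C(K\setminus V)$; hence by \Cref{thm: real stable phase retrieval} (or its complex counterpart) we can choose $f,g\in E$ with $\|f\|=\|g\|=1$ and $\bigl\||Rf|\wedge|Rg|\bigr\|_{K\setminus V}\leq\eps$. Since $|Tf|,|Tg|<\eps$ on $V$, we conclude $\bigl\||Tf|\wedge|Tg|\bigr\|_K\leq\eps$. As $\eps>0$ was arbitrary, $T(C[1,\omega^2])$ contains almost disjoint pairs, giving the desired contradiction. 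The main technical obstacle I foresee is the concentration step of the second paragraph: weak*-convergence alone only delivers $\mu_k([1,\omega])\to 0$, so upgrading this to the total-variation bound $|\mu_k|([1,\omega])\to 0$ genuinely uses the strengthening $\|\mu_k\|\to 1$ via the clopen splitting.
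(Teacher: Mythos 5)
Your argument is correct, and it is genuinely different from the paper's. The paper stays entirely at the level of function evaluations: it extracts from \Cref{prop: copy of Comegaalpha implies Kalpha} the point $t_0\in K''$ with $|T\uno(t_0)|=1$, uses the SPR hypothesis to produce witness points $s_{n,m}$ for the lower bounds $\||T\uno_{1,n}|\wedge|T\uno_{1,m}|\|\geq 1/C$, shows these witnesses accumulate (through $K'$) at $t_0$, and then violates the isometry by evaluating the explicit norm-one test function $\uno-\uno_1+\theta\,\xi_M\uno_{1,M}$ at a witness point close to $t_0$, obtaining a value strictly larger than $1$. You instead dualize: you pass to the representing measures $\mu_k=T^*\delta_k$, show $\mu_{t_0}=\theta\,\delta_{\omega^2}$, and prove the concentration estimate $|\mu_k|([1,\omega])\to 0$ near $t_0$ — your clopen splitting $[1,\omega^2]=[1,\omega]\sqcup(\omega,\omega^2]$ combined with weak*-lower semicontinuity of the total variation on each piece is exactly the right way to upgrade the weak* information to a variation bound, and that step is sound. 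This localizes the problem: the copy of $c_0$ supported on $[1,\omega)$ is forced to live, isometrically, on $K\setminus V$ where $(K\setminus V)'$ is finite, and the known $\alpha=1$ obstruction finishes the job. Your route is softer and more structural (it explains the failure as a reduction to the $\alpha=1$ case), at the cost of invoking Riesz representation and weak* compactness arguments; the paper's route is more elementary and quantitative, producing an explicit numerical contradiction $1+\frac{1}{C}-2\varepsilon>1$, and needs only the ``easy'' direction of the almost-disjoint-pair characterization.

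One citation in your last step needs adjusting in the complex case. You deduce that $R(E)$, being infinite-dimensional and non-SPR in $C(K\setminus V)$, contains almost disjoint pairs, citing \Cref{thm: real stable phase retrieval} ``or its complex counterpart.'' The complex counterpart of that theorem does \emph{not} say that failure of SPR yields almost \emph{disjoint} pairs — it only yields almost perpendicular pairs with a uniform separation — so this direction cannot be quoted for complex scalars. (Likewise \Cref{thm: SPR subspace iif K' infinite} is a real-scalar statement.) The fix is already in the paper: since $(K\setminus V)'$ is finite, the closed span of the atoms of $C(K\setminus V)$ has finite codimension, and the gliding-hump argument in the proof of \Cref{prop: gliding hump} — which is stated for $\R$ and $\C$ — directly manufactures normalized $\varepsilon$-almost disjoint pairs in \emph{any} infinite-dimensional subspace, in particular in $R(E)$. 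With that substitution your proof goes through for both scalar fields.
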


\begin{proof}
Let us first establish the notation: we will denote by $\uno$ the constant function on $[1,\omega^2]$. Given a natural number $m_1\geq 1$, $\uno_{m_1}$ will represent the characteristic function of the clopen set $(\omega\cdot(m_1-1),\omega\cdot m_1]$.
Similarly, given natural numbers $m_1,m_2\geq 1$, $\uno_{m_1,m_2}$ will stand for the characteristic function of the clopen set $\{\omega\cdot (m_1-1)+m_2\}$.

Now, given an isometric embedding $T:C[1,\omega^2]\to C(K)$ and natural numbers $m_1,m_2\geq 1$, there must exist $t_{m_1,m_2}\in K$ such that
\begin{equation}\label{eq:key-prop21}  \bigl|\bigl(T\uno+T\uno_{m_1}+T\uno_{m_1,m_2} \bigr)(t_{m_1,m_2}) \bigr|=3.  
\end{equation}
Indeed, recall that $\uno+\uno_{m_1}+\uno_{m_1,m_2}$ has norm $3$ in $C[1,\omega^2]$ and $T$ is norm-preserving. From the expression \eqref{eq:key-prop21}, given that $\|T\uno\|=\|T\uno_{m_1}\|=\|T\uno_{m_1,m_2}\|=1$, we deduce the existence of some unimodular scalar $\theta_{m_1,m_2}$ such that
\[T\uno(t_{m_1,m_2})=T\uno_{m_1}(t_{m_1,m_2})=T\uno_{m_1,m_2}(t_{m_1,m_2})=\theta_{m_1,m_2}.\]

We claim that there exists $t_0\in \{t_{m_1,m_2}\::\: m_1,m_2\geq 1\}''\subseteq K''$.  To prove it, first we show that if $m_1\geq 1$ is fixed, then the points $t_{m_1,m_2}$ with $m_2\geq 1$ are distinct. Indeed, suppose that $t_{m_1,m_2}= t_{m_1,m_2'}$ for some $m_2\neq m_2'$. Then, $\theta_{m_1,m_2}^{-1}\uno_{m_1,m_2}+ \theta_{m_1,m_2'}^{-1}\uno_{m_1,m_2'}$ has norm $1$ in $C[1,\omega^2]$, but $T(\theta_{m_1,m_2}^{-1}\uno_{m_1,m_2}+\theta_{m_1,m_2'}^{-1}\uno_{m_1,m_2'})(t_{m_1,m_2})=2$, which is a contradiction with the fact that $T$ is norm-preserving. Therefore, for each $m_1\geq 1$ the sequence $(t_{m_1,m_2})_{m_2=1}^\infty$ must have at least an accumulation point $t_{m_1}\in K'$. By the continuity of each $\uno_{m_1}$, it follows that $|T\uno (t_{m_1})|=|T\uno_{m_1}(t_{m_1})|=1$. Repeating the same argument, we can show that the points $t_{m_1} $ with $ m_1\geq 1$ are distinct, so there must be an accumulation point $t_0\in \{t_{m_1}\::\: m_1\geq 1\}'\subseteq (K')'=K''$. Since $|K''|=1$, it follows that $K''=\{t_0\}$. Observe that, since $|T\uno(t_{m_1})|=1$ for all $m_1$, we have $|T\uno(t_0)|=1$. In addition, $T\uno_1(t_0)=0$. Otherwise, we could find a unimodular scalar $\lambda$ such that $\lambda T\uno_1(t_0)>0$. Since $\uno_1$ and $\uno_{m_1}$ are disjoint for any $m_1>1$, we have that $\|\lambda \uno_1+\nu \uno_{m_1}\|=1$ for any choice of unimodular $\nu$. In particular, putting $\nu=(T\uno_{m_1}(t_{m_1}))^{-1}$, we get that
\[1\geq |\lambda T\uno_1+\nu T\uno_{m_1}|(t_{m_1})=|\lambda T\uno_1(t_{m_1})+1|.\]
Taking the limit in $m_1$ we obtain that
\[1\geq |\lambda T\uno_1(t_0)+1|=\lambda T\uno_1(t_0)+1>1,\]
which is a contradiction. Therefore, $T\uno_1(t_0)=0$. 

Now, suppose that $T(C[1,\omega^2])$ does SPR. By \Cref{thm: real stable phase retrieval} (or \cite[Proposition 2.5]{CGH} in the complex setting), there exists a constant $C>0$ such that for every $n,m\geq 1$, with $n\neq m$, we have
$$
\||T\uno_{1,n}|\land |T\uno_{1,m}| \|_\infty \geq \frac{1}{C}.
$$
Thus, for every $n\geq 1$, there exists a sequence $(s_{n,m})_{m=n+1}^\infty\subseteq K$ such that 
$$
(|T\uno_{1,n}|\land |T\uno_{1,m}|)(s_{n,m})\geq \frac{1}{C}.
$$
Fix any $n\geq 1$. We claim that the sequence $(s_{n,m})_{m=n+1}^\infty$ has infinitely many distinct elements. Even more: no term of the sequence $(s_{n,m})_{m=n+1}^\infty$ can be repeated an infinite number of times. Suppose that a point $s\in K$ appears $N$ times in the sequence, that is, there are $m_1<m_2<\cdots<m_N$ such that $s_{n,m_1}=s_{n,m_2}=\cdots=s_{n,m_N}=s$. Since $|T\uno_{1,m_k}  (s_{n,m_k})|\geq\frac{1}{C}$ for $k=1,\ldots, N$, we can find a scalar $\theta_{k}$ of modulus $1$ such that
$$
\theta_{k}\,T\uno_{1,m_k}(s_{n,m_k})\geq \frac{1}{C}.
$$
It is clear that $\sum_{k=1}^N\theta_{k}\,\uno_{1,m_k}$ has norm $1$ in $C[1,\omega^2]$, and since $T$ is norm-preserving, the function
$\sum_{k=1}^N\theta_{k}\,T\uno_{1,m_k}$ must have norm $1$. If we evaluate this function at $s$ we have
$$
1\geq\sum_{k=1}^N\theta_{k}\,T\uno_{1,m_k}(s_{n,m_k})\geq \frac{N}{C},
$$
so $N$ must be less than or equal to $C$. Therefore, for any $n\geq 1$ we can find an accumulation point $s_{n}$ of the sequence $(s_{n,m})_{m=n+1}^\infty$, which clearly belongs to $K'$.

By the continuity of each $T\uno_{1,n}$, we have that $|T\uno_{1,n}(s_{n})|\geq \frac{1}{C}$ for every $n\geq 1$. In addition, using the same argument, it is easy to check that for every $n\geq 1$, the sequence $(s_{n})_{n=1}^\infty$ has infinitely many different points. In particular, this implies that the sequence $(s_{n})_{n=1}^\infty\subseteq K'$ has an accumulation point, which must be $t_0$, as $K''=\{t_0\}$.

Now we consider the family of functions $\uno-\uno_1+\zeta_m\uno_{1,m}$, where $m\geq 1$ and $\zeta_m$ are unimodular scalars. Observe that since these functions have norm $1$ in $C[1,\omega^2]$, then $\|T\uno-T\uno_1+\zeta_mT\uno_{1,m}\|_\infty=1$ for every $m\geq 1$. In the first part of the proof, we have pointed out that $T\uno(t_0)=\theta$ for some scalar $\theta$ of modulus $1$ and $T\uno_1(t_0)=0$. Fix $0<\varepsilon<\frac{1}{2C}$ and let $U$ be a neighborhood of $t_0$ in $K$ such that 
$$
|T\uno(t)-\theta|<\varepsilon \quad \text{ and } \quad |T\uno_1(t)|<\varepsilon \quad \text{ for every } t\in U.
$$
Since $t_0$ is an accumulation point of the sequence $(s_{n})_{n=1}^\infty$, there exists $M\in\N$ such that $s_{M}\in U$. We also know that $|T\uno_{1,M}(s_{M})|\geq \frac{1}{C}$, so take $\xi_{M}$ of modulus 1 such that 
$$
\xi_{M}\,T\uno_{1,M}(s_{M})\geq \frac{1}{C}.
$$
Finally, we consider the function $T\bigl(\uno-\uno_1+\theta \,\xi_{M}\,\uno_{1,M}\bigr)$, which should have norm $1$ in $C(K)$. However, if we evaluate this function at $s_{M}$ we obtain the following:
\begin{eqnarray*}
 |T\uno-T\uno_1+ \theta \,\xi_{M}\,T\uno_{1,M} |(s_{M}) &= &|T\uno(s_{M})-\theta +\theta + \theta \,\xi_{M}\,T\uno_{1,M}(s_{M}) - T\uno_1(s_{M})| \\
 & \geq& |\theta + \theta \,\xi_{M}\,T\uno_{1,M}(s_{M})|- |T\uno(s_{M})-\theta| - |T\uno_1(s_{M})| \\
 &=& 1+\xi_{M}\,T\uno_{1,M}(s_{M}) - 2\varepsilon\geq 1+\frac{1}{C}-2\varepsilon>1,
\end{eqnarray*}
which is a contradiction.
\end{proof}

Since the second derivative of $[1,\omega^2]$ is the singleton $\{\omega^2\}$, the above proposition has the following immediate consequence:

\begin{corr}\label{coro: Cw2 cannot be SPR embedded into itself}
  $C[1,\omega^2]$ cannot be embedded isometrically into itself in an SPR way.   
\end{corr}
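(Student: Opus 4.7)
My plan is to derive this corollary directly from Proposition~\ref{prop: Cw2 cannot be SPR embedded into itself} by taking $K=[1,\omega^2]$ and verifying that $|K''|=1$. The only work to be done is a brief topological computation of the iterated Cantor--Bendixson derivatives of the ordinal interval $[1,\omega^2]$ equipped with the order topology.

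First I would identify $K'=[1,\omega^2]'$. A point $t\in[1,\omega^2]$ is isolated precisely when $t$ is a successor ordinal (or $t=1$), since then $(t-1,t+1)\cap[1,\omega^2]=\{t\}$ is a basic open set in the order topology; on the other hand, every limit ordinal $t\leq\omega^2$ is an accumulation point, as every basic neighborhood of $t$ of the form $(\beta,t]$ with $\beta<t$ necessarily meets $[1,\omega^2]\setminus\{t\}$. Thus
\[
K'=\{\omega\cdot n : 1\leq n<\omega\}\cup\{\omega^2\}.
\]
Next I would compute $K''=(K')'$. Each $\omega\cdot n$ with $n<\omega$ is isolated in $K'$, because $(\omega\cdot(n-1),\omega\cdot n+1)\cap K'=\{\omega\cdot n\}$. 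The point $\omega^2$, however, is an accumulation point of $K'$, since the sequence $(\omega\cdot n)_{n\in\N}$ in $K'$ converges to $\omega^2$ in the order topology. Hence
\[
K''=\{\omega^2\},
\]
so $|K''|=1$.

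With this in hand, Proposition~\ref{prop: Cw2 cannot be SPR embedded into itself} applied to $K=[1,\omega^2]$ yields immediately that there is no isometric SPR embedding of $C[1,\omega^2]$ into $C(K)=C[1,\omega^2]$, which is the desired conclusion. I do not anticipate any obstacle here; the only thing to be careful about is the (standard) identification of the iterated derivatives of $[1,\omega^2]$, and since the hard work has already been absorbed into Proposition~\ref{prop: Cw2 cannot be SPR embedded into itself}, this corollary is essentially a one-line deduction.
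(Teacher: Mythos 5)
Your proposal is correct and coincides with the paper's own argument: the paper likewise observes that the second Cantor--Bendixson derivative of $[1,\omega^2]$ is the singleton $\{\omega^2\}$ and then invokes Proposition~\ref{prop: Cw2 cannot be SPR embedded into itself} with $K=[1,\omega^2]$. Your explicit computation of $K'$ and $K''$ is accurate and simply spells out what the paper leaves as a one-line remark.
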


\Cref{prop: Cw2 cannot be SPR embedded into itself} can be generalized to other finite ordinals $\alpha$ using a similar argument, even though the consequences are not as interesting for our purposes as in the case $\alpha=2$.

\begin{prop}
    Let $2\leq \alpha<\omega$ be an ordinal and $K$ be a compact Hausdorff space with $|K^{(\alpha)}|=1$. If $T:C[1,\omega^\alpha]\rightarrow C(K)$ is an isometric embedding, then for every $\eps>0$ there exists $f_1,\ldots, f_\alpha\in S_{C[1,\omega^\alpha]}$ such that
    \[\norm[3]{\bigwedge_{\beta=1}^\alpha |Tf_\beta|}<\eps.\]
\end{prop}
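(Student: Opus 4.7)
The plan is to argue by contradiction, adapting the strategy of Proposition 4.6 to arbitrary $\alpha$. Suppose there exists $\eps > 0$ such that $\|\bigwedge_\beta |Tf_\beta|\|_\infty \geq \eps$ for every $f_1,\ldots,f_\alpha \in S_{C[1,\omega^\alpha]}$; the goal is to produce a norm-one $g \in C[1,\omega^\alpha]$ with $\|Tg\|_\infty > 1$. First, I would re-run the tree construction from the proof of Proposition 2.1 to obtain points $t_{m_1,\ldots,m_k} \in K^{(\alpha-k)}$ with $|T\uno_{m_1,\ldots,m_k}(t_{m_1,\ldots,m_k})| = 1$, each an accumulation point of $(t_{m_1,\ldots,m_k,m_{k+1}})_{m_{k+1}}$, and culminating in the unique $t_0 \in K^{(\alpha)}$ with $T\uno(t_0) = \theta_0$ unimodular. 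The key auxiliary claim to establish is that $T\uno_{m_1,\ldots,m_k}(t_0) = 0$ for every non-empty tuple: for any $m_1' \neq m_1$, the functions $\uno_{m_1,\ldots,m_k}$ and $\uno_{m_1'}$ are disjoint, so $\|a\uno_{m_1,\ldots,m_k}+b\uno_{m_1'}\|_\infty = 1$ for any unimodular $a,b$; evaluating its $T$-image at $t_{m_1'}$ and using $|T\uno_{m_1'}(t_{m_1'})| = 1$, the constraint $|az+b\theta_{m_1'}| \leq 1$ for all unimodular $a,b$ (with $z := T\uno_{m_1,\ldots,m_k}(t_{m_1'})$ and $\theta_{m_1'} := T\uno_{m_1'}(t_{m_1'})$) forces $z = 0$ in both the real and complex settings; letting $m_1' \to \infty$ and invoking continuity then yields the claim.

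The second step is a chain-accumulation driven by the contradiction hypothesis. I would apply it to $f_\beta = \uno_{\underbrace{1,\ldots,1}_{\alpha-1},n_\beta}$ for distinct $n_1<\cdots<n_\alpha$, obtaining $s_{n_1,\ldots,n_\alpha}\in K$ with $|T\uno_{1,\ldots,1,n_\beta}(s_{n_1,\ldots,n_\alpha})|\geq \eps$ for every $\beta$. Iteratively, fixing all but the last free index and letting it range over $\N$, the ``no infinite repetition'' argument from the proof of Proposition 4.6 (bounding the multiplicity of any point by $1/\eps$ via the norm of a unimodular combination of the pairwise disjoint singletons $\uno_{1,\ldots,1,n}$) ensures that the sequence has infinitely many distinct values, and hence an accumulation point in the next Cantor--Bendixson derivative; meanwhile, continuity of the fixed functions $T\uno_{1,\ldots,1,n_\beta}$ preserves the lower bounds $\geq \eps$ at the still-fixed indices. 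After $\alpha - 1$ iterations, this produces distinct points $(s_{n_1})_{n_1}\subseteq K^{(\alpha-1)}$ satisfying $|T\uno_{1,\ldots,1,n_1}(s_{n_1})|\geq \eps$; since their accumulation set lies in $K^{(\alpha)}=\{t_0\}$, some subsequence converges to $t_0$.

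Finally, I would define $g := \uno - \uno_{\underbrace{1,\ldots,1}_{\alpha-1}} + \theta_0\xi_{n_1}\,\uno_{\underbrace{1,\ldots,1}_{\alpha-1},n_1}$, where $\xi_{n_1}$ is the unimodular scalar chosen so that $\xi_{n_1}T\uno_{1,\ldots,1,n_1}(s_{n_1})$ is real and $\geq \eps$. A direct inspection on the three regions $(\omega,\omega^\alpha]$, $[1,\omega]\setminus\{n_1\}$, and $\{n_1\}$ gives $\|g\|_\infty = 1$. For $n_1$ large along the subsequence converging to $t_0$, the Key Claim (applied to the tuples $(1)$ and $(1,\ldots,1)$ of length $\alpha-1$) together with continuity makes both $|T\uno(s_{n_1})-\theta_0|$ and $|T\uno_{1,\ldots,1}(s_{n_1})|$ arbitrarily small, so $|Tg(s_{n_1})|\geq 1+\eps-o(1)>1$, contradicting $\|Tg\|_\infty = \|g\|_\infty = 1$. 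The main technical obstacle I anticipate is the careful bookkeeping of the chain accumulation through $\alpha$ levels of Cantor--Bendixson derivatives while tracking which pointwise lower bounds survive each accumulation step, together with identifying indices so that the auxiliary function $g$ has norm exactly $1$; the uniform verification of the intersection-of-disks step across both scalar fields, while elementary, also has to be done with care.
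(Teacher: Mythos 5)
Your proposal is correct and takes precisely the route the paper intends: the paper states this proposition without proof, noting only that it follows from ``a similar argument'' to \Cref{prop: Cw2 cannot be SPR embedded into itself}, and your argument is the right generalization of that proof. The two ingredients you supply beyond the $\alpha=2$ case --- the $\alpha$-level chain of accumulation points driven by the $\alpha$-fold meet hypothesis (with the multiplicity bound $N\leq 1/\eps$ coming from unimodular combinations of disjoint singletons guaranteeing infinitely many distinct points at each Cantor--Bendixson level), and the vanishing of $T\uno_{1,\ldots,1}$ at the unique point $t_0\in K^{(\alpha)}$ via the disk-intersection argument at the points $t_{m_1'}$ --- are exactly what is needed, and the final test function $\uno-\uno_{1,\ldots,1}+\theta_0\xi_{n_1}\uno_{1,\ldots,1,n_1}$ delivers the contradiction as in the $\alpha=2$ case.
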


To summarize, we have just seen that $C[1,\omega^2]$ does not embed isometrically into itself in an SPR way (\Cref{prop: Cw2 cannot be SPR embedded into itself}), but we already knew that $c_0$ does so in $C[1,\omega^2]$ (in fact, in $C_0[1,\omega^2)$ by \Cref{prop: SPR copy of c0 in C0[0 w2]}). How optimal is the latter? We will show next that we can slightly improve this, as $C_0[1,\omega^2)$ embeds isometrically into itself doing SPR.

\begin{prop}\label{prop: SPR embedding of C0w2 into itself}
    There exists an isometric SPR embedding of $C_0[1,\omega^2)$ into itself.
\end{prop}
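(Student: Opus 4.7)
The plan is to combine the isometric SPR embedding of $c_0$ into $C_0[1,\omega^2)$ from \Cref{prop: SPR copy of c0 in C0[0 w2]} with the $6$-isomorphism between $C[1,\omega^2]$ and $c_0$ from the preceding lemma, via the same device as in \Cref{prop: SPR embedding of Cwn into Cwncupw2}, and then fold the resulting $\ell_\infty$-direct sum back into $C_0[1,\omega^2)$ using a self-similarity of the underlying locally compact space. The crucial structural observation is that $[1,\omega^2)$ splits as a clopen disjoint union of two copies of itself: writing $J_k = (\omega\cdot(k-1),\omega\cdot k]$, each of which is homeomorphic to $[1,\omega]$, the sets $A = \bigsqcup_{k\text{ odd}} J_k$ and $B = \bigsqcup_{k\text{ even}} J_k$ are clopen in $[1,\omega^2)$, and each is a countable disjoint union of copies of $[1,\omega]$, hence homeomorphic to $[1,\omega^2)$ itself. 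This produces a lattice isometric identification
\[\Phi \colon C_0[1,\omega^2) \oplus_\infty C_0[1,\omega^2) \xrightarrow{\sim} C_0[1,\omega^2),\]
which, being a lattice isomorphism, preserves SPR subspaces. Hence it suffices to construct an isometric SPR embedding of $C_0[1,\omega^2)$ into the left-hand side.

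To this end, I would follow the same pattern as the proof of \Cref{prop: SPR embedding of Cwn into Cwncupw2}. Let $R \colon C[1,\omega^2] \to c_0$ be the $6$-isomorphism furnished by the preceding lemma applied with $\alpha=2$, normalized so that $\|R\|\le 1$ and $\|R^{-1}\|\le 6$; since these bounds hold pointwise, the restriction of $R$ to $C_0[1,\omega^2)$ remains an isomorphism onto its image in $c_0$ with the same constants. Let $S \colon c_0 \to C_0[1,\omega^2)$ denote the isometric $3$-SPR embedding from \Cref{prop: SPR copy of c0 in C0[0 w2]}, and define
\[T \colon C_0[1,\omega^2) \to C_0[1,\omega^2) \oplus_\infty C_0[1,\omega^2), \qquad Tf = (SRf,\, f).\]
Isometry is immediate: $\|Tf\|_\infty = \max(\|SRf\|,\|f\|) = \max(\|Rf\|,\|f\|) = \|f\|$, since $S$ is isometric and $\|R\| \le 1$.

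For the SPR property, given $f,g \in C_0[1,\omega^2)$ with $\|f\|=\|g\|=1$, put $\tilde f = Rf/\|Rf\|$ and $\tilde g = Rg/\|Rg\|$, which are unit vectors in $c_0$. The lower bound $\|Rf\|,\|Rg\| \ge 1/6$, the elementary pointwise inequality $\alpha|x|\wedge\beta|y| \ge \min(\alpha,\beta)(|x|\wedge|y|)$ for $\alpha,\beta \ge 0$, and the fact that $S(c_0)$ does $3$-SPR together give
\[\bigl\||Tf|\wedge|Tg|\bigr\|_\infty \;\ge\; \bigl\||SRf|\wedge|SRg|\bigr\|_\infty \;\ge\; \tfrac{1}{6}\bigl\||S\tilde f|\wedge|S\tilde g|\bigr\|_\infty \;\ge\; \tfrac{1}{18}.\]
By \Cref{thm: real stable phase retrieval}, $T(C_0[1,\omega^2))$ does $18$-SPR in the codomain; composing with $\Phi$ produces the desired isometric SPR embedding of $C_0[1,\omega^2)$ into itself.

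The only step that is not a routine transfer from \Cref{prop: SPR embedding of Cwn into Cwncupw2} is the self-duplication identification $C_0[1,\omega^2) \oplus_\infty C_0[1,\omega^2) \cong C_0[1,\omega^2)$, which I expect to be the main conceptual point. This self-similarity is precisely the structural feature that distinguishes the locally compact space $[1,\omega^2)$ from its one-point compactification $[1,\omega^2]$: the unique Cantor--Bendixson top point $\omega^2$ of the latter obstructs any analogous clopen splitting, which is morally why \Cref{prop: Cw2 cannot be SPR embedded into itself} forbids the corresponding embedding for $C[1,\omega^2]$ while it is available here.
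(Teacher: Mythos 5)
Your proof is correct and follows essentially the same route as the paper's: identify $C_0[1,\omega^2)$ with $C_0[1,\omega^2)\oplus_\infty C_0[1,\omega^2)$ via a clopen self-splitting of $[1,\omega^2)$, then embed by $f\mapsto (S\circ Rf,\,f)$ and bound $\||Tf|\wedge|Tg|\|_\infty$ from below. The only divergence is that the paper uses a $4$-isomorphism $R:C_0[1,\omega^2)\to c_0$ adapted to the locally compact setting (exploiting $f(\omega^2)=0$, which kills one term in the telescoping sum) and so gets SPR constant $12$ rather than your $18$; this is immaterial to the statement.
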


\begin{proof}
    First, observe that $C_0[1,\omega^2)$ is lattice isometric to $C_0[1,\omega^2)\oplus_\infty C_0[1,\omega^2)$. Next, recall that $C_0[1,\omega^2)$ is linearly 4-isomorphic to $c_0([1,\omega^2))=c_0$ by means of the contractive operator $R$ that sends any $f\in C_0[1,\omega^2)$ to the collection of coefficients $a(\omega\cdot n)=\frac{1}{2}f(\omega\cdot n)$ and $a(\omega\cdot (n-1)+m)=\frac{1}{2}(f(\omega\cdot (n-1)+m)-f(\omega\cdot n))$, $n,m\geq 1$ (in this case, $\|R^{-1}\|\leq 4$). Let $S:c_0\rightarrow C_0[1,\omega^2)$ be the isometric SPR embedding constructed in \Cref{prop: SPR copy of c0 in C0[0 w2]}. Then, the operator
    \[\fullfunction{T}{C_0[1,\omega^2)}{C_0[1,\omega^2)\oplus_\infty C_0[1,\omega^2)}{f}{Tf=(S\circ Rf,f)}\]
    is an isometric SPR embedding of $C_0[1,\omega^2)$ into itself. Indeed, $T$ is clearly contractive and $\|Tf\|\geq \|f\|$, since one of the factors is the identity operator. Now we check that the image of $T$ does SPR in the same way as we did in \Cref{prop: SPR embedding of Cwn into Cwncupw2}.

 We observe that if $f,g\in S_{C_0[1,\omega^2)}$, then $\|Rf\|,\|Rg\|\geq \frac{1}{\|R^{-1}\|}$, so since $S$ is a $3$-SPR embedding, then
$$
        \left\||Tf|\land |Tg| \right\|_\infty\geq \left\||SRf|\land |SRg| \right\|_\infty\geq\frac{1}{\|R^{-1}\|}\left\|\left|S\left(\frac{Rf}{\|Rf\|}\right)\right|\land \left|S\left(\frac{Rg}{\|Rg\|}\right)\right| \right\|_\infty 
        \geq  \frac{1}{12}.
   $$ 
        Thus, \Cref{thm: real stable phase retrieval} yields that $T$ does $12$-SPR.   
\end{proof}
\begin{rem}
This proposition shows that assertion $(ii)$ in \Cref{thm: SPR subspace iif K' infinite} can be replaced by: \emph{$C_0[1,\omega^2)$ embeds isometrically into $C(K)$ as an SPR subspace}.   
\end{rem}

\subsection{Open questions}
As a consequence of the results of this section we can provide a fairly complete answer to the \hyperref[main-question]{Main Question}  for every finite ordinal $\alpha$. Observe that for $\alpha>2$ the conclusions turn out to be \emph{stronger} than what was originally conjectured in \Cref{question:timur's-question-0}. 
However, the SPR constant of these subspaces will diverge as $\alpha$ grows. This might be an artifact of the proof, since we are trying to build all the overlapping needed to avoid almost disjoint pairs of functions of $C[1,\omega^\alpha]$ in such a small place as $[1,\omega^2]$. 

\begin{question}\label{quest: SPR constant explodes}
\Cref{prop: SPR embedding of Cwn into Cwncupw2} actually shows that there is a $6(\alpha+1)$-SPR isometric embedding of $C[1,\omega^\alpha]$ into $C_0[1,\omega^2)\oplus_\infty C[1,\omega^\alpha]$. Since $C_0[1,\omega^2)\oplus_\infty C[1,\omega^\alpha]$ is lattice isometric to $C[1,\omega^\alpha]$ for $\alpha\geq 3$ (see the proof of \Cref{coro: SPR embedding of Cwalpha into Cwalpha}), this implies that in these cases $C[1,\omega^\alpha]$ isometrically embeds into itself doing SPR with constant $\leq 6(\alpha+1)$. 
   Can the latter be improved so that the SPR constant is uniform in $\alpha$?
\end{question}

Another limitation of the proof of \Cref{prop: SPR embedding of Cwn into Cwncupw2} is that this construction cannot be further generalized to $\alpha=\omega$, as $C[1,\omega^\omega]$ is not linearly isomorphic to $c_0$.

\begin{question}\label{quest: omega to omega}
    Is it possible to isometrically embed $C[1,\omega^\omega]$ into itself in an SPR way? 
\end{question}

We conclude this section by discussing the possibility of extending Theorem \ref{thm: SPR subspace iif K' infinite} to AM-spaces. Recall that a Banach lattice $X$ is said to be an \emph{AM-space} if $\|x\lor y\|=\max\{\|x\|,\|y\|\}$ for every pair of positive vectors $x,y\in X$. A well-known result by Kakutani states that every AM-space is lattice isometric to a (closed) sublattice of a $C(K)$-space (see \cite[Theorem 4.29]{AliprantisBurkinshaw} or \cite[Theorem 2.1.3]{MeyerNieberg}). In \cite{Eugene}, E. Bilokopytov formulates the following question:

\begin{question}{\cite[Question 5.4]{Eugene}}\label{question:eugene-Am-spaces}
     Let $X$ be an AM-space. If $X^a$, the \emph{order continuous part of $X$}, has infinite codimension in $X$, then does $X$ contain an SPR subspace of infinite dimension?
\end{question}

Note that given any compact Hausdorff space $K$, the order continuous part $C(K)^a$ has infinite codimension in $C(K)$ if and only if $K'$ is infinite (see, for instance, \cite[Remark 5.12]{BGHMT}). That is, condition $(i)$ of Theorem \ref{thm: SPR subspace iif K' infinite} can be replaced by \emph{$C(K)^a$ has infinite codimension in $C(K)$}. This means that Question \ref{question:eugene-Am-spaces} has an affirmative answer for $C(K)$-spaces. We will now give a \textit{partial answer} to this question by showing that the implication $(i)\Rightarrow(ii)$ of Theorem \ref{thm: SPR subspace iif K' infinite} does not extend to AM-spaces: we will construct an AM-space $X$ such that $X^a$ has infinite codimension in $X$ but it cannot even contain subspaces isomorphic to $c_0$ doing SPR.

Let us consider the following AM-space:
\begin{equation}\label{eq: counterexample Eugene's Q?}
X:=\left\{f\in C_0[1,\omega^2)\::\: \frac{1}{n}f(\omega\cdot(n-1)+1)=f(\omega\cdot n) \quad \text{ for all } n\geq 1\right\}.    
\end{equation}
By \cite[Proposition 2.6]{BGHMT}, $X^a$ has infinite codimension in $X$. Note that the norm-one lattice homomorphisms are: $\delta_{\omega\cdot n+m}$ for $n\geq 0$ and $m\geq 2$ are the coordinate functionals, and $n\delta_{\omega\cdot n}\equiv \delta_{\omega\cdot(n-1)+1}$, for $n\geq 1$, are the remaining norm-one lattice homomorphisms on $X$.

\begin{prop}\label{prop:c0-no-SPR-Am-espacio}
$c_0$ does not isomorphically embed into the space $X$ defined in (\ref{eq: counterexample Eugene's Q?}) in an SPR way.    
\end{prop}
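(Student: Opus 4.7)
The plan is to show that $X$ is the closed linear span of its atoms, and then to invoke the contrapositive of \Cref{prop: gliding hump}: a Banach lattice in which the atoms span a subspace of finite codimension cannot contain any infinite-dimensional SPR subspace. Since $c_0$ is infinite-dimensional, no isomorphic copy of $c_0$ inside $X$ can then do SPR. In fact this argument gives the stronger conclusion that \emph{no} infinite-dimensional subspace of $X$ can do SPR.

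First, I would identify two families of atoms of $X$. For each $n\geq 1$, the vector
\[a_n := \chi_{\{\omega(n-1)+1\}} + \tfrac{1}{n}\,\chi_{\{\omega n\}}\]
lies in $X$ because it satisfies the defining constraint; it is positive, of norm $1$, and one checks that it is an atom by observing that any $h\in X$ with $0\leq h\leq a_n$ is supported on the pair $\{\omega(n-1)+1,\omega n\}$, and the constraint forces $h=\lambda a_n$ for some $0\leq\lambda\leq 1$. For each $n\geq 0$ and $m\geq 2$, the singleton characteristic $\chi_{\{\omega n+m\}}$ lies in $X$ trivially (the constraint reduces to $0=0$ at every index) and is obviously an atom. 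These two families are pairwise disjoint, and together their supports exhaust $[1,\omega^2)$.

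Next, I would establish that every $f\in X$ admits the (necessarily unique) atomic expansion
\[f \;=\; \sum_{n=1}^\infty f(\omega(n-1)+1)\,a_n \;+\; \sum_{\substack{n\geq 0\\ m\geq 2}} f(\omega n+m)\,\chi_{\{\omega n+m\}},\]
with convergence in the norm of $X$. The equality is pointwise by the disjoint coverage of $[1,\omega^2)$ described above, while norm convergence follows from the fact that $f\in C_0[1,\omega^2)$: given $\varepsilon>0$, the continuity of $f$ at each limit ordinal $\omega n$ controls the tail in the $m$-direction inside each block $(\omega(n-1),\omega n]$, whereas $f$ vanishing at infinity controls the tail in the $n$-direction. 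Consequently, $X$ coincides with the closed linear span of its atoms; a posteriori, this realizes $X$ as lattice isometric to $c_0$.

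Finally, the closed span of atoms of $X$ has codimension $0$ in $X$, so the contrapositive of \Cref{prop: gliding hump} yields that $X$ contains no infinite-dimensional SPR subspace. By \Cref{thm: real stable phase retrieval}, every isomorphic copy of $c_0$ inside $X$ must then contain $\varepsilon$-almost disjoint pairs for every $\varepsilon>0$, and therefore cannot do SPR. The only delicate point in the argument is the verification of norm convergence of the atomic expansion; everything else is routine bookkeeping with ordinal intervals.
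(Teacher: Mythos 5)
There is a fatal gap at the very first step: the vectors $a_n=\chi_{\{\omega\cdot(n-1)+1\}}+\tfrac{1}{n}\chi_{\{\omega\cdot n\}}$ do \emph{not} belong to $X$. Membership in $X$ requires continuity on $[1,\omega^2)$, and $a_n$ is discontinuous at the limit ordinal $\omega\cdot n$: it vanishes on the cofinal set $(\omega\cdot(n-1)+1,\omega\cdot n)$ yet takes the value $\tfrac1n\neq 0$ at $\omega\cdot n$. In fact no atom of $X$ can charge a point of the form $\omega\cdot(n-1)+1$: if $f\in X$ is positive with $f(\omega\cdot(n-1)+1)>0$, then $f(\omega\cdot n)=\tfrac1n f(\omega\cdot(n-1)+1)>0$, so by continuity $f$ is strictly positive on a tail of $(\omega\cdot(n-1),\omega\cdot n)$, and one can split off a multiple of $\chi_{\{\omega\cdot(n-1)+m\}}$ with $m\geq 2$ lying below $f$ but not proportional to it. Hence the atoms of $X$ are precisely the multiples of $\chi_{\{\omega\cdot n+m\}}$ with $m\geq 2$, and their closed span is contained in $\bigcap_{n\geq 1}\ker\delta_{\omega\cdot(n-1)+1}$, which has \emph{infinite} codimension in $X$. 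This is not incidental: the entire point of the example is that $X^a$ --- which for an AM-space coincides with the closed span of the atoms --- has infinite codimension in $X$ (this is exactly what the sentence invoking \cite[Proposition 2.6]{BGHMT} before the statement asserts); were your claim correct, $X$ would be lattice isometric to $c_0$ and would be worthless as a test case for Question~\ref{question:eugene-Am-spaces}. Consequently the hypothesis of the contrapositive of \Cref{prop: gliding hump} fails and that proposition yields nothing here. The subsequent claims (the atomic expansion, norm convergence, $X\cong c_0$) all collapse with it.

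The correct argument has to work harder precisely because the gliding hump is unavailable. The paper's proof assumes an isomorphic embedding $T:c_0\to X$ with $T(c_0)$ doing $C$-SPR, uses that the normalized lattice homomorphisms on the AM-space $X$ are $1$-norming to produce, for each pair $n\neq m$, a functional $x^*_{n,m}$ witnessing $x^*_{n,m}(|Te_n|)\wedge x^*_{n,m}(|Te_m|)\geq \tfrac{1}{\|T^{-1}\|C}$; a counting argument (in the spirit of \Cref{prop: Cw2 cannot be SPR embedded into itself}) shows that for each $n$ these functionals accumulate, in the weak$^*$ topology, at some $\delta_{\omega\cdot k_n}$ with infinitely many distinct $k_n$; the contradiction then comes from the quantitative feature of $X$ that $\|\delta_{\omega\cdot k}\|_{X^*}=\tfrac1k\to 0$, which is incompatible with the uniform lower bound $x_n^*(|Te_n|)\geq\tfrac{1}{\|T^{-1}\|C}$. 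If you want to repair your write-up, you should abandon the atom-span route entirely and follow this functional-analytic counting scheme.
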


\begin{proof}
Suppose that there exists an isomorphic embedding $T:c_0\to X$ with the property that $T(c_0)$ does $C$-SPR in $X$. By \Cref{thm: real stable phase retrieval}, $T(c_0)$ does not contain $C$-almost disjoint pairs, so for every natural numbers $n,m\geq 1$, $n\neq m$ we have
$$
\bigl\||Te_n|\land |Te_m|\bigr\|\geq \frac{1}{\|T^{-1}\|C},
$$
and since $X$ is an AM-space, $\text{Hom}(X,\mathbb{R})\cap S_{X^*}$, the set of normalized lattice homomorphisms from $X$ to $\R$, is $1$-norming for $X$ (see, for instance, \cite[Proposition 5.4]{BGHMT}), so for every $n,m\in\mathbb{N}$ there exists $x_{n,m}^*\in \text{Hom}(X,\mathbb{R})\cap S_{X^*}$ such that 
$$
x_{n,m}^*(|Te_n|)\land x_{n,m}^*(|Te_m|)\geq \frac{1}{\|T^{-1}\|C}.
$$

Let us see what can be said about these lattice homomorphisms $(x_{n,m}^*)_{n,m=1}^\infty$. First, note that for every $n\geq 1$ fixed, there are infinitely many distinct lattice homomorphisms in the sequence $(x_{n,m}^*)_{m>n}$. In fact, by mimicking the argument of the proof of \Cref{prop: Cw2 cannot be SPR embedded into itself}, one can check that no lattice homomorphism appears infinitely many times in this sequence (each lattice homomorphism may appear $\|T^{-1}\|\|T\|C$ different times at most).

   Now, observe that for every $n\geq 1$ fixed, there exists a natural $k_n\geq 1$ such that the set $\{x^*_{n,m}\::\: m>n\}\cap \{\delta_{t}\::\: t\in (\omega\cdot(k_n-1),\omega\cdot k_n)\}$ is infinite. Otherwise, we could find a sequence $(t_k)_{k=1}^\infty\subseteq [1,\omega^2)$ with $t_k\to \omega^2$ such that $\delta_{t_k}\in \{x_{n,m}\::\: m>n\}$ for all $k$. Since $\delta_{t_k}\overset{\omega^*}{\to} 0$ (given that the functions in $X$ vanish at $\omega^2$), we obtain that $0\in\overline{\{x_{n,m}^*\::\:m>n\}}^{w^*}$. But this is impossible, as $x_{n,m}^*(|Te_n|)\geq \frac{1}{\|T^{-1}\|\,C}$ for every $m>n$.

Therefore, for every natural $n\geq 1$, $x_n^*:=\delta_{\omega\cdot k_n}$ is an accumulation point of $(x_{n,m}^*)_{m>n}$ and $x_{n}^*(|Te_n|)\geq \frac{1}{\|T^{-1}\|\,C}$. Note that as a consequence of the definition of $X$ we have that $\|x_n^*\|=\frac{1}{k_n}$ for every $n$, so 
$$
\frac{1}{\|T^{-1}\| C}\leq x_n^*(|Te_n|)\leq \|T\|\|x_n^*\|=\frac{1}{k_n}\|T\|,  
$$
that is, $k_n\leq \|T^{-1}\|\|T\|C$ for every $n\geq 1$. On the other hand, replicating the argument of \Cref{prop: Cw2 cannot be SPR embedded into itself}, it can be proven that there must be infinitely many different $x_n^*$'s, so the sequence $(k_n)_n$ cannot be bounded, which is a contradiction.
\end{proof}

\section*{Acknowledgements}
The research of E.~Garc\'ia-S\'anchez and D.~de~Hevia is partially supported by the grants PID2020-116398GB-I00, PID2024-162214NB-I00, CEX2019-000904-S and CEX 2023-001347-S funded by the MICIU/AEI/10.13039/501100011033. 
E.~Garc\'ia-S\'anchez is partially supported by the grant CEX2019-000904-S-21-3 funded by MICIU/AEI/10. 13039/501100011033 and by ``ESF+''. D. de Hevia benefited from an FPU Grant FPU20/03334 from the Ministerio de Ciencia, Innovación y Universidades. \\

The authors are especially grateful to Mitchell A.~Taylor for his tutoring and mentoring role throughout the project. The authors also want to thank Antonio Avilés, Eugene Bilokopytov, Jesús Illescas, Timur Oikhberg, Alberto Salguero and Pedro Tradacete for their valuable discussions and suggestions.\\

After this work was completed, Jakub Rondo\v{s} and Damian Sobota kindly pointed out their paper \cite{RS}, where they studied a similar but weaker property than property \eqref{property *}. Their paper was completed independently and in parallel with the development of our arguments (see version 1 of our paper on arXiv). In this new edition, we have incorporated ideas from both papers to make a clearer presentation of our results. We thank Jakub Rondo\v{s} and Damian Sobota for bringing their paper to our attention.

\end{document}